\font\cyr=wncyr10 scaled \magstep1%
\def\Sh{\text{\cyr Sh}}
\newcommand{\xrightarrowdbl}[2][]{%
  \xrightarrow[#1]{#2}\mathrel{\mkern-14mu}\rightarrow
}
\DeclareMathOperator{\cok}{coker}
\newcommand{\fp}{\mathfrak{p}}
\newcommand{\fg}{\mathfrak{g}}
\newcommand{\Hom}{\text{Hom}}
\newcommand{\End}{\text{End}}
\newcommand{\gen}{\text{gen}}
\newcommand{\Sp}{\text{Spec} \,}
\chardef\bslash=`\\ 
\newtheorem{theorem}{Theorem}[section]
\newtheorem{prop}[theorem]{Proposition}
\newtheorem{lem}[theorem]{Lemma}
\newtheorem{cor}[theorem]{Corollary}
\theoremstyle{definition}
\newtheorem{remark}[theorem]{Remark}
\newtheorem{example}[theorem]{Example}
\numberwithin{equation}{section}
\newtheorem*{maintheorem*}{Main Theorem}
\theoremstyle{definition}
\newtheorem{definition}{Definition}
\newtheorem{notation}[theorem]{Notation}
\newcommand{\surj}{\twoheadrightarrow}
\newcommand{\CO}{\mathcal{O}}
\newcommand{\CS}{\mathcal{O}_S}
\newcommand{\Oiy}{\mathcal{O}_{\{\iy\}}}
\newcommand{\ClS}{\text{Cl}_S}
\newcommand{\SOV}{\un{\textbf{SO}}_q}
\newcommand{\wh}{\widehat}
\renewcommand{\sectionmark}[1]{}
\renewcommand{\Im}{\operatorname{Im}}
\newcommand{\af}{\text{af}}
\newcommand{\Br}{{\mathrm{Br}}}
\newcommand{\iy}{\infty}
\newcommand{\bk}{\bigskip}
\newcommand{\fc}{\frac}
\newcommand{\G}{\Gamma}
\renewcommand{\sc}{\text{sc}}
\newcommand{\Pic}{\text{Pic~}}
\newcommand{\dl}{\delta}
\newcommand{\lm}{\lambda}
\newcommand{\Om}{\Omega}
\newcommand{\ov}{\overline}
\newcommand{\un}{\underline}
\newcommand{\BG}{\mathbb{G}}
\newcommand{\BF}{\mathbb{F}}
\newcommand{\Q}{\mathbb{Q}}
\newcommand{\Z}{\mathbb{Z}}
\renewcommand{\a}{\alpha}
\newcommand{\et}{\text{\'et}}
\newcommand{\p}{\varphi}
\newcommand{\A}{\mathbb{A}}
\begin{document}



\baselineskip 20pt
\setcounter{equation}{0}
\pagestyle{plain}
\pagenumbering{arabic}

\title{On the genera of semisimple groups defined over an integral domain of a global function field}
\author{Rony A. Bitan}
\thanks{The research  was partially supported  by the ERC grant  291612.}

\begin{abstract}
Let $K=\BF_q(C)$ be the global function field of rational functions over a smooth and projective curve $C$ defined over a finite field $\BF_q$.  
The ring of regular functions on $C-S$ where $S \neq \emptyset$ is any finite set of closed points on $C$  is a Dedekind domain $\CS$ of $K$. 
For a semisimple $\CS$-group $\un{G}$ with a smooth fundamental group $\un{F}$, 
we aim to describe both the set of genera of $\un{G}$ and its principal genus (the latter if $\un{G} \otimes_{\CS} K$ is isotropic at $S$) 
in terms of abelian groups depending on $\CS$ and $\un{F}$ only. 
This leads to a necessary and sufficient condition for the Hasse local-global principle to hold for certain $\un{G}$. 
We also use it to express the Tamagawa number $\tau(G)$ of a semisimple $K$-group $G$ by the Euler Poincar\'e invariant.    
This facilitates the computation of $\tau(G)$ for twisted $K$-groups.     
\end{abstract}

\maketitle
\markright{On the genera of groups over an integral domain of a global function field}  

\pagestyle{headings}

\section{Introduction} \label{Introduction}
Let $C$ be a projective algebraic curve defined over a finite field $\BF_q$, 
assumed to be geometrically connected and smooth. 
Let $K=\BF_q(C)$ be the global field of rational functions over $C$, 
and let $\Om$ be the set of all closed points of $C$. 
For any point $\fp \in \Om$, let $v_\fp$ be the induced discrete valuation on $K$,    
$\hat{\CO}_\fp$ the complete valuation ring with respect to $v_\fp$,    
and $\hat{K}_\fp, k_\fp$ its fraction field and residue field at $\fp$, respectively.  
Any \emph{Hasse set} of $K$, namely, a non-empty finite set $S \subset \Om$,  
gives rise to an integral domain of $K$ called a \emph{Hasse domain}: 
$$ \CS := \{x \in K: v_\fp(x) \geq 0 \ \forall \fp \notin S\}. $$ 
This is a regular and one dimensional Dedekind domain. 
Group schemes defined over $\Sp \CS$ are underlined, being omitted in the notation of their generic fibers.  

\bk
  
Let $\un{G}$ be an affine, smooth and of finite type group scheme defined over $\Sp \CS$. 
We define $H^1_\et(\CS,\un{G})$ to be the set of isomorphism classes
of $\un{G}$-torsors over $\Sp \CS$ relative to the \'etale or the flat topology 
(the classification for the two topologies coincide when $\un{G}$ is smooth; cf. \cite[VIII~Cor.~2.3]{SGA4}).  
The sets $H^1(K,G)$ and $H^1_\et(\hat{\CO}_\fp,\un{G}_\fp)$, for every $\fp \notin S$, are defined similarly. 
All these three sets are naturally pointed:  
the distinguished point of $H^1_\et(\CS,\un{G})$ (resp., $H^1(K,G)$, $H^1_\et(\hat{\CO}_\fp,\un{G}_\fp)$) 
is the class of the trivial $\un{G}$-torsor $\un{G}$ (resp. trivial $G$-torsor $G$, trivial $\un{G}_\fp$-torsor $\un{G}_\fp$). 
There exists a canonical map of pointed-sets (mapping the distinguished point to the distinguished point): 
\begin{equation} \label{lm}
\lm: H^1_\et(\CS,\un{G}) \to H^1(K,G) \times \prod\limits_{\fp \notin S} H^1_\et(\hat{\CO}_\fp,\un{G}_\fp)       
\end{equation}
which is defined by mapping a class in $H^1_\et(\CS,\un{G})$ represented by $X$  
to the class 
represented by $(X \otimes_{\CS} \Sp K) \times \prod_{\fp \notin S} X \otimes_{\CS} \Sp \hat{\CO}_\fp$.  
Let $[\xi_0] := \lm([\un{G}])$. 
The \emph{principal genus} of $\un{G}$ is then $\lm^{-1}([\xi_0])$, 
i.e., the classes of $\un{G}$-torsors over $\Sp \CS$ that are generically and locally trivial at all points of $\CS$. 
More generally, a \emph{genus} of $\un{G}$ is any fiber $\lm^{-1}([\xi])$ where $[\xi] \in \Im(\lm)$.  
The \emph{set of genera} of $\un{G}$ is then: 
$$ \text{gen}(\un{G}) := \{ \lm^{-1}([\xi]) \ : \ [\xi] \in \Im(\lm) \}, $$ 
whence $H^1_\et(\CS,\un{G})$ is a disjoint union of its genera. 

\bk

Given a representative $P$ of a class in $H^1_\et(\CS,\un{G})$, 
by referring also to $\un{G}$ as a $\un{G}$-torsor acting on itself by conjugations, 
the quotient of $P \times_{\CS} \un{G}$
by the $\un{G}$-action $(p,g) \mapsto (ps^{-1},sgs^{-1})$   
is an affine $\CS$-group scheme ${^P}\un{G}$, 
called the \emph{twist} of $\un{G}$ by $P$.  
It is an inner form of $\un{G}$, thus is locally isomorphic to $\un{G}$ in the \'etale topology, 
namely, every fiber of it at a prime of $\CS$ is isomorphic to $\un{G}_\fp := \un{G} \otimes_{\CS} \hat{\CO}_\fp$ 
over some finite \'etale extension of $\hat{\CO}_\fp$. 
The map $\un{G} \mapsto {^P}\un{G}$ defines a bijection of pointed-sets $H^1_\et(\CS,\un{G}) \to H^1_\et(\CS,{^P}\un{G})$ (e.g., \cite[\S 2.2, Lemma 2.2.3, Examples 1,2]{Sko}). 

\bk

A group scheme defined over $\Sp \CS$ is said to be \emph{reductive} if  
it is affine and smooth over $\Sp \CS$, and each geometric fiber of it at a prime $\fp$ is (connected) reductive over $k_\fp$ (\cite[Exp.~XIX Def.~2.7]{SGA3}).  
It is \emph{semisimple} if it is reductive, and the rank of its root system equals that of its lattice of weights (\cite[Exp.~XXI Def.~1.1.1]{SGA3}).  
Suppose $\un{G}$ is semisimple and that its fundamental group $\un{F}$ is of order prime to $\text{char}(K)$. 
Being finite, of multiplicative type (\cite[XXII, Cor.~4.1.7]{SGA3}), commutative and smooth,   
$\un{F}$ decomposes into finitely many factors of the form $\text{Res}_{R/\CS}(\un{\mu}_m)$ 
or $\text{Res}^{(1)}_{R/\CS}(\un{\mu}_m)$ where $\un{\mu}_{m} := \Sp \CS[t]/(t^{m}-1)$ 
and $R$ is some finite (possibly trivial) \'etale extension of $\CS$. 
Consequently, $H^r_\et(\CS,\un{F})$ are abelian groups for all $r \geq 0$. 
The following two $\CS$-invariants of $\un{F}$ will play a major role in the description of $H^1_\et(\CS,\un{G})$:  

\begin{definition} \label{i}
Let $R$ be a finite \'etale extension of $\CS$. 
We define: 
\begin{align*} 
i(\un{F}) := \left \{ \begin{array}{l l}
\Br(R)[m]                        &  \un{F} = \text{Res}_{R/\CS}(\un{\mu}_m) \\
\ker(\Br(R)[m] \xrightarrow{N^{(2)}} \Br(\CS)[m])  &  \un{F} = \text{Res}^{(1)}_{R/\CS}(\un{\mu}_m) 
\end{array}\right.  
\end{align*} 
where $N^{(2)}$ is induced by the norm map $N_{R/\CS}$ 
and for a group $*$, $*[m]$ stands for its $m$-torsion~part.  
For $\un{F} = \prod_{k=1}^r \un{F}_k$ where each $\un{F}_k$ is one of the above, 
$i(\un{F})$ is the direct product $\prod_{k=1}^r i(\un{F}_k)$.

We also define for such $R$: 
\begin{align}  
j(\un{F}) := \left \{ \begin{array}{l l}
\Pic(R)/m                                                          &  \un{F} = \text{Res}_{R/\CS}(\un{\mu}_m) \\
\ker \left( \Pic(R)/m \xrightarrow{N^{(1)}/m} \Pic(\CS)/m \right)  &  \un{F} = \text{Res}^{(1)}_{R/\CS}(\un{\mu}_m) \\
\end{array}\right.  
\end{align} 
where $N^{(1)}$ is induced by $N_{R/\CS}$, and again $j(\prod_{k=1}^r \un{F}_k) := \prod_{k=1}^r j(\un{F}_k)$. 
\end{definition}

\begin{definition} \label{admissible}
We call $\un{F}$ \emph{admissible} if it is a finite direct product of the following factors: 
\begin{itemize}
\item[(1)] $\text{Res}_{R/\CS}(\un{\mu}_m)$, 
\item[(2)] $\text{Res}^{(1)}_{R/\CS}(\un{\mu}_m), [R:\CS]$ is prime to $m$,
\end{itemize} 
where $R$ is any finite \'etale extension of $\CS$. 
\end{definition}
 
After computing in Section \ref{Section: class set} the cohomology sets of some related $\CS$-groups, 
we observe in Section~\ref{Section genera} Proposition \ref{sequence of wG}, 
that if $\un{F}$ is admissible then there exists an exact sequence of pointed~sets:  
$$ 1 \to \ClS(\un{G}) \xhookrightarrow{h} H^1_\et(\CS,\un{G}) \xrightarrow{w_{\un{G}}} i(\un{F}) \to 1.  $$
We deduce in Corollary \ref{genera} that $\text{gen}(\un{G})$ bijects to $i(\un{F})$. 
In Section \ref{Section genus}, Theorem \ref{genus isotropic}, we show that $\ClS(\un{G})$ surjects onto $j(\un{F})$. 
If $G_S := \prod_{s \in S} G(\hat{K}_s)$ is non-compact, 
then this is a bijection.  
This leads us to formulate in Corollary~\ref{criterion} a necessary 
and sufficient condition for the \emph{Hasse local-global principle} to hold for $\un{G}$.  
In Section \ref{Section application}, we use the above results to express in Theorem \ref{tau G 2} 
the Tamagawa number $\tau(G)$ of an almost simple $K$-group $G$ with an admissible fundamental group~$F$,    
using the (restricted) Euler-Poincar\'e characterstic of some $\CS$-model of $F$ 
and a local invariant, and show how this new description 
facilitates the computation of $\tau(G)$ when $G$ is a twisted group.  

\bk

\section{\'Etale cohomology} \label{Section: class set}
\subsection{The class set}
Consider the ring of $S$-integral ad\`eles $\A_S := \prod_{\fp \in S} \hat{K}_\fp \times \prod_{\fp \notin S} \hat{\CO}_\fp$, 
being a subring of the ad\`eles $\A$.     
The $S$-\emph{class set} of an affine and of finite type $\CS$-group $\un{G}$ is the set of double cosets:  
$$ \ClS(\un{G}) := \un{G}(\A_S) \backslash \un{G}(\A) / G(K)  $$
(when over each $\hat{\CO}_\fp$ the above local model $\un{G}_\fp$ is taken). 
It is finite (cf. \cite[Proposition~3.9]{BP}), and its cardinality, 
called the $S$-\emph{class number} of $\un{G}$, is denoted by $h_S(\un{G})$. 
According to Nisnevich (\cite[Thm.~I.3.5]{Nis}) if $\un{G}$ is smooth, the map $\lm$ introduced in \eqref{lm} applied to it  
forms the following exact sequence of pointed-sets 
(when the trivial coset is considered as the distinguished point in $\ClS(\un{G})$): 
\begin{equation} \label{Nis} 
1 \to \ClS(\un{G}) \to H^1_\et(\CS,\un{G}) \xrightarrow{\lm} H^1(K,G) \times \prod_{\fp \notin S} H^1_\et(\hat{\CO}_\fp,\un{G}_\fp).      
\end{equation} 
The left exactness reflects the fact that $\ClS(\un{G})$ can be identified with the principal genus of $\un{G}$. 

If, furthermore, $\un{G}$ has the property: 
\begin{equation} \label{property}
\forall \fp \notin S : \ \ H^1_\et(\hat{\CO}_\fp,\un{G}_\fp) \hookrightarrow H^1_\et(\hat{K}_\fp,G_\fp),    
\end{equation}  
then sequence \eqref{Nis} is simplified to (cf. \cite[Cor.~3.6]{Nis}):  
\begin{equation} \label{Nis simple}
1 \to \ClS(\un{G}) \to H^1_\et(\CS,\un{G}) \xrightarrow{\lm_K} H^1(K,G),       
\end{equation}  
which indicates that 
any two $\un{G}$-torsors share the same genus if and only if they are $K$-isomorphic. 
If $\un{G}$ has connected fibers, then by Lang's Theorem $H^1_\et(\hat{\CO}_\fp,\un{G}_\fp)$ vanishes for any prime $\fp$ 
(see \cite[Ch.VI, Prop.5]{Ser} and recall that all residue fields are finite), thus $\un{G}$ has property \eqref{property}.  

\begin{remark} \label{Picard group is finite} 
The multiplicative $\CS$-group $\un{\BG}_m$ admits property \eqref{property} thus sequence \eqref{Nis simple},   
in which the rightmost term vanishes by Hilbert 90 Theorem.   
Hence the class set $\ClS(\un{\BG}_m)$, being finite as previously mentioned,  
is bijective as a pointed-set to $H^1_\et(\CS,\un{\BG}_m)$, 
which is identified with $\Pic(\CS)$ (cf. \cite[Cha.III,\S4]{Mil1}) thus being finite too. 
This holds true for any finite \'etale extension $R$ of $\CS$. 
\end{remark}

\begin{remark} \label{disconnected} 
If $\un{G}$ (locally of finite presentation) is disconnected but its connected component $\un{G}^0$ is reductive and $\un{G}/\un{G}^0$ is a finite representable group, 
then it admits again property \eqref{property} (see the proof of Proposition~3.14 in \cite{CGP}), 
thus sequence \eqref{Nis simple} as well. 
If, furthermore, for any $[\un{G}'] \in \ClS(\un{G})$, the map $G'(K) \to (G'/(G')^0)(K)$ is surjective, 
then $\ClS(\un{G}) = \ClS(\un{G}^0)$ (cf. \cite[Lemma~3.2]{Bit3}). 
\end{remark}

\begin{lem} \label{H1=1 sc} 
Let $\un{G}$ be a smooth and affine $\CS$-group scheme with connected fibers.   
Suppose that its generic fiber $G$ is almost simple, simply connected and $G_S$ is non-compact. 
Then $H^1_\et(\CS,\un{G})=1$. 
\end{lem}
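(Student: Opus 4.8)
The plan is to reduce $H^1_\et(\CS,\un{G})$ to the class set $\ClS(\un{G})$ and then to kill the latter by strong approximation.

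Since $\un{G}$ has connected fibers, Lang's theorem gives $H^1_\et(\hat{\CO}_\fp,\un{G}_\fp)=1$ for every $\fp\notin S$ (exactly as recalled after \eqref{property}), so $\un{G}$ satisfies property \eqref{property} and Nisnevich's sequence \eqref{Nis} collapses to \eqref{Nis simple}:
$$ 1 \to \ClS(\un{G}) \to H^1_\et(\CS,\un{G}) \xrightarrow{\lm_K} H^1(K,G). $$
Next I would appeal to the Hasse principle for simply connected groups (Harder, with Chernousov for type $E_8$): the localization map $H^1(K,G)\to\prod_\fp H^1(\hat{K}_\fp,G)$ is injective. As $K$ is a function field it has no archimedean places, and $H^1$ of a simply connected group over a non-archimedean local field vanishes (Kneser, via Bruhat--Tits theory); hence $H^1(K,G)=1$. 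With the target of $\lm_K$ trivial, exactness of \eqref{Nis simple} forces the injection $\ClS(\un{G})\hookrightarrow H^1_\et(\CS,\un{G})$ to be a bijection, so that $H^1_\et(\CS,\un{G})=\ClS(\un{G})$ as pointed sets.

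It remains to prove $\ClS(\un{G})=1$. Here I would use strong approximation: since $G$ is simply connected and almost simple and $G_S$ is non-compact (equivalently, $G$ is $S$-isotropic), the theorem of Kneser--Platonov--Prasad asserts that $G(K)$ is dense in the restricted product of the $\un{G}(\hat{K}_\fp)$ over $\fp\notin S$. Because $\prod_{\fp\notin S}\un{G}(\hat{\CO}_\fp)$ is open in this product and is built into $\A_S$, density gives $\un{G}(\A)=\un{G}(\A_S)\cdot G(K)$, i.e. $h_S(\un{G})=1$ and $\ClS(\un{G})=1$. Combined with the reduction above, this yields $H^1_\et(\CS,\un{G})=1$.

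The genuine content is entirely concentrated in the two cited theorems (vanishing of $H^1(K,G)$ and strong approximation); the rest is formal bookkeeping with \eqref{Nis simple}. Accordingly, the main point to verify carefully is that the hypothesis ``$G_S$ non-compact'' is precisely the $S$-isotropy required by strong approximation, and that both inputs depend only on the generic fiber $G$, while the connectedness of the fibers is exactly what supplies Lang's theorem at the primes $\fp\notin S$.
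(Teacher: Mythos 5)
Your argument is correct and follows essentially the same route as the paper, which simply defers to Lemma 3.2 of \cite{Bit1} (replacing $\{\iy\}$ by $S$), a proof that likewise reduces to the class set via Lang's theorem and Nisnevich's sequence, kills $H^1(K,G)$ by Harder's theorem, and then applies strong approximation to get $h_S(\un{G})=1$. Your write-up just makes explicit the steps the paper leaves to the citation.
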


\begin{proof}
The proof, basically relying on the strong approximation property related to $G$, 
is the one of Lemma 3.2 in \cite{Bit1}, replacing $\{\iy\}$ by $S$. 
\end{proof}

\subsection{The fundamental group: the quasi-split case}
The following is the Shapiro Lemma for the \'etale cohomology:

\begin{lem} \label{Shapiro}
Let $f:R \to S$ be a finite \'etale extension of schemes 
and $\G$ a smooth $R$-module. Then 
$\forall p: \ H^p_\et(S,\text{Res}_{R/S}(\G)) \cong H^p_\et(R,\G)$. 
\end{lem}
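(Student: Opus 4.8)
The plan is to identify the Weil restriction $\text{Res}_{R/S}(\G)$ with the sheaf-theoretic direct image $f_*\G$ on the small \'etale site of $S$, and then to run the Leray spectral sequence for $f$, exploiting that a finite morphism carries no higher direct images. First I would recall the functor of points: for an $S$-scheme $T$, one has $\text{Res}_{R/S}(\G)(T) = \G(T \times_S R)$, which is precisely the group of sections of $f_*\G$ over $T$. Hence the smooth commutative $S$-group scheme $\text{Res}_{R/S}(\G)$ (again smooth since $f$ is finite \'etale) represents the abelian \'etale sheaf $f_*\G$, and the two cohomology theories $H^p_\et(S, \text{Res}_{R/S}(\G))$ and $H^p_\et(S, f_*\G)$ coincide.

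Next I would establish the crucial input: because $f$ is finite, the direct image functor $f_*$ is exact on \'etale abelian sheaves, equivalently $R^q f_* = 0$ for all $q \geq 1$. I would verify this on geometric stalks. For a geometric point $\bar{s} \to S$, the stalk $(f_*\F)_{\bar{s}}$ is the product $\prod_{\bar{r}} \F_{\bar{r}}$ over the geometric points $\bar{r} \to R$ lying above $\bar{s}$; since $f$ is finite, this product is finite. As finite products are exact and stalks detect exactness on the \'etale site, the functor $f_*$ is exact, and therefore its right derived functors vanish in positive degrees.

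With both facts in hand, the Leray spectral sequence $H^p_\et(S, R^q f_* \G) \Rightarrow H^{p+q}_\et(R, \G)$ collapses onto the row $q=0$, producing canonical isomorphisms $H^p_\et(S, f_*\G) \cong H^p_\et(R, \G)$ for every $p$. Composing with the identification $f_*\G \cong \text{Res}_{R/S}(\G)$ from the first step yields the assertion. The one genuinely delicate point is the stalk computation underlying the exactness of $f_*$ (together with checking that $\text{Res}_{R/S}(\G)$ really represents $f_*\G$ rather than merely agreeing on sections); the passage through the spectral sequence is then purely formal.
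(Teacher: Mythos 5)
Your proposal is correct and follows essentially the same route as the paper, which simply cites SGA4 VIII Cor.~5.6: identify $\text{Res}_{R/S}(\G)$ with $f_*\G$ and observe that the Leray spectral sequence for the finite morphism $f$ degenerates because $R^qf_*=0$ for $q\geq 1$. You have merely filled in the standard details (the functor-of-points identification and the stalkwise exactness of $f_*$) that the paper leaves to the reference.
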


(See \cite[VIII, Cor.~5.6]{SGA4} in which the Leray spectral sequence for $R/S$ degenerates, 
whence the edge morphism $H^p_\et(S,\text{Res}_{R/S}(\G)) \to H^p_\et(R,\G)$ is an isomorphism.)  

\begin{remark} \label{finite etale extension is embedded in generic fiber} 
As $C$ is smooth, $\Sp \CS$ is normal, i.e., is integrally closed locally everywhere, 
thus any finite \'etale covering of $\CS$ arises by its normalization 
in some separable unramified extension of $K$ (e.g., \cite[Theorem~6.13]{Len}). 
\end{remark}

Assume $\un{F} = \text{Res}_{R/\CS}(\un{\mu}_m)$, $R$ is finite \'etale over $\CS$.  
Then the Shapiro Lemma (\ref{Shapiro}) with $p=2$ gives $H^2_\et(\CS,\un{F}) \cong H^2_\et(R,\un{\mu}_m)$. 
\'Etale cohomology applied to the Kummer sequence over $R$  
\begin{equation} \label{R Kummer} 
1 \to \un{\mu}_m \to \un{\BG}_m \xrightarrow{x \mapsto x^m} \un{\BG}_m \to 1 
\end{equation}   
gives rise to the exact sequences of abelian groups: 
\begin{align} \label{Kummer mu2 H1} 
1 &\to H^0_\et(R,\un{\mu}_m)  \to  R^\times                         \xrightarrow{\times m}   (R^\times)^m  \to 1,  \\ \nonumber
1 &\to R^\times/(R^\times)^m  \to  H^1_\et(R,\un{\mu}_m)            \to                    \Pic(R)[m] \to 1,  \\ \nonumber
1 &\to \Pic(R)/m              \to  H^2_\et(R,\un{\mu}_m)            \xrightarrow{i_*}      \Br(R)[m]  \to 1,  
\end{align}
in which as above $\Pic(R)$ is identified with $H^1_\et(R,\un{\BG}_m)$,   
and the Brauer group $\Br(R)$ -- classifying Azumaya $R$-algebras -- is identified with $H^2_\et(R,\un{\BG}_m)$ (cf. \cite[Cha.IV, \S 2]{Mil1}).  


\subsection{The fundamental group: the non quasi-split case} \label{subsection nqs}

The group $\un{F} = \text{Res}^{(1)}_{R/\CS}(\un{\mu}_m)$ fits into the short exact sequence of smooth $\CS$-groups 
(recall $\un{\mu}_m$ is assumed to be smooth as $m$ is prime to $\text{char}(K)$):  
\begin{equation*} 
1 \to \un{F} \to \text{Res}_{R/\CS}(\un{\mu}_m) \xrightarrow{N_{R/\CS}} \un{\mu}_m \to 1     
\end{equation*} 
which yields by \'etale cohomology together with Shapiro's isomorphism the long exact sequence:
\begin{equation} \label{LES nqs} 
 ... \to H^r_\et(\CS,\un{F}) \xrightarrow{I^{(r)}} H^r_\et(R,\un{\mu}_m) \xrightarrow{N^{(r)}} H^r_\et(\CS,\un{\mu}_m) \to H^{r+1}_\et(\CS,\un{F}) \to ... \ .
\end{equation}

\begin{notation} \label{[m] and /m}
For a group homomorphism $f:A \to B$, we denote by $f/m:A/m \to B/m$ and $f[m]:A[m] \to B[m]$ 
the canonical maps induced by $f$.  
\end{notation}

\begin{lem} \label{N surjective}
If $[R:\CS]$ is prime to $m$, then $N^{(r)},N^{(r)}[m]$ and $N^{(r)}/m$ are surjective for all $r \geq 0$.  
In particular, if $\un{F} = \text{Res}_{R/\CS}^{(1)}(\un{\mu}_m)$, then sequence \ref{LES nqs} induces an exact sequence of abelian groups 
for every $r \geq 0$: 
\begin{equation} \label{degree is prime to m}
1 \to H^r_\et(\CS,\un{F}) \xrightarrow{I^{(r)}} H^r_\et(R,\un{\mu}_m) \xrightarrow{N^{(r)}} H^r_\et(\CS,\un{\mu}_m) \to 1. 
\end{equation}
\end{lem}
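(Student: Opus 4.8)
The plan is to identify $N^{(r)}$ with the cohomological transfer attached to the finite \'etale covering $f \colon \Sp R \to \Sp \CS$ and to invoke the projection formula $\mathrm{cor}\circ\mathrm{res}=[R:\CS]$. First I would record the torsion observation underpinning everything: since the sheaf $\un{\mu}_m$ is annihilated by $m$, so are all the cohomology groups $H^r_\et(R,\un{\mu}_m)$ and $H^r_\et(\CS,\un{\mu}_m)$; in particular multiplication by any integer prime to $m$ is an automorphism of each of them. As $\Sp \CS$ is connected, the degree $d:=[R:\CS]$ is a well-defined constant, assumed here to be prime to $m$.

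Next I would realize $N^{(r)}$ as a transfer. Under the identification of Weil restriction with pushforward, $\text{Res}_{R/\CS}(\un{\mu}_m)=f_*f^*\un{\mu}_m$, the norm morphism $N_{R/\CS}$ is the multiplicative trace $f_*f^*\un{\mu}_m\to\un{\mu}_m$, and Shapiro's isomorphism (Lemma~\ref{Shapiro}) turns the induced map $N^{(r)}$ into the corestriction $\mathrm{cor}\colon H^r_\et(R,\un{\mu}_m)\to H^r_\et(\CS,\un{\mu}_m)$; dually the unit $\un{\mu}_m\to f_*f^*\un{\mu}_m$ induces the restriction $\mathrm{res}$. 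Since trace composed with unit is multiplication by $d$ on $\un{\mu}_m$, I get $N^{(r)}\circ\mathrm{res}=\times d$ on $H^r_\et(\CS,\un{\mu}_m)$. By the torsion observation $\times d$ is invertible on this $m$-torsion group, so $\mathrm{res}\circ(\times d)^{-1}$ is a section of $N^{(r)}$ and $N^{(r)}$ is (split) surjective for every $r\geq 0$. Because the groups are already $m$-torsion, the canonical identifications $A[m]=A$ and $A/m=A$ make $N^{(r)}[m]$ and $N^{(r)}/m$ coincide with $N^{(r)}$, so these are surjective as well.

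For the ``in particular'' clause I would feed this back into the long exact sequence \eqref{LES nqs}. Exactness shows that the connecting map $H^r_\et(\CS,\un{\mu}_m)\to H^{r+1}_\et(\CS,\un{F})$ has kernel equal to the image of $N^{(r)}$; surjectivity of $N^{(r)}$ therefore forces every connecting map to vanish. Vanishing of the connecting map preceding $I^{(r)}$ makes $I^{(r)}$ injective, and the long exact sequence then collapses to the short exact sequences \eqref{degree is prime to m}.

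The step I expect to be the main obstacle is the identification in the second paragraph: checking that the arithmetically defined $N^{(r)}$ is genuinely the cohomological transfer, i.e.\ that the norm morphism on $\un{\mu}_m$ coincides with the trace $f_*f^*\to\mathrm{id}$ and that Shapiro's isomorphism intertwines restriction with the unit and the norm with the counit-trace. Granting this compatibility and the standard formula $\mathrm{cor}\circ\mathrm{res}=[R:\CS]$, the remainder is purely formal homological algebra.
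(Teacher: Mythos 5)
Your argument for the surjectivity of $N^{(r)}$ is exactly the paper's: the composition $\un{\mu}_{m,\CS}\to\text{Res}_{R/\CS}(\un{\mu}_{m,R})\xrightarrow{N_{R/\CS}}\un{\mu}_{m,\CS}$ is multiplication by $d=[R:\CS]$, so after applying cohomology and Shapiro the map $N^{(r)}$ admits $\mathrm{res}\circ(\times d)^{-1}$ as a section on the $m$-torsion groups $H^r_\et(\CS,\un{\mu}_m)$. Your derivation of the short exact sequences \eqref{degree is prime to m} from the vanishing of the connecting maps is also fine (the paper leaves this implicit).

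There is, however, one genuine discrepancy. You read $N^{(r)}[m]$ and $N^{(r)}/m$ as the maps induced by $N^{(r)}$ on $H^r_\et(R,\un{\mu}_m)$, and dismiss them as coinciding with $N^{(r)}$ since those groups are already $m$-torsion. That is not what the lemma is asserting: in the paper these symbols denote the maps induced by the norm on $\un{\BG}_m$-cohomology, restricted to $m$-torsion and reduced mod $m$ respectively --- concretely $N^{(1)}/m:\Pic(R)/m\to\Pic(\CS)/m$ and $N^{(2)}[m]:\Br(R)[m]\to\Br(\CS)[m]$, which are exactly the maps whose surjectivity is needed in diagram \eqref{N^2 diagram} and Proposition \ref{i_*' surjective}. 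As written, your proof does not establish surjectivity of these. The gap is closed by the same device: replace $\un{\mu}_m$ by $\un{\BG}_m$ in the composition above, so that $N^{(r)}\circ\mathrm{res}=\times d$ on $H^r_\et(\CS,\un{\BG}_m)$; this identity is preserved on passing to the subgroups $(\cdot)[m]$ and the quotients $(\cdot)/m$, where multiplication by $d$ is again an automorphism because those groups are killed by $m$. So the statement is true and your method extends verbatim, but the extra step for the $\un{\BG}_m$-cohomology must be made explicit.
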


\begin{proof}
The composition of the induced norm $N_{R/\CS}$ with the diagonal morphism coming from the Weil restriction 
\begin{equation} \label{composition} 
\un{\mu}_{m,\CS} \to \text{Res}_{R/\CS}(\un{\mu}_{m,R}) \xrightarrow{N_{R/\CS}} \un{\mu}_{m,\CS} 
\end{equation}
is the multiplication by $n := [R:\CS]$. 
It induces for every $r \geq 0$ the maps:
\begin{equation} \label{N} 
H^r_\et(\CS,\un{\mu}_m) \to H^r_\et(R,\un{\mu}_m) \xrightarrow{N^{(r)}} H^r_\et(\CS,\un{\mu}_m) 
\end{equation}
whose composition is again the multiplication by $n$ on $H^r_\et(\CS,\un{\mu}_m)$,  
being an automorphism when $n$ is prime to $m$.  
Hence $N^{(r)}$ is surjective for all $r \geq 0$. 

Replacing $\un{\mu}_m$ with $\un{\BG}_m$ in sequence \eqref{composition} and 
taking the $m$-torsion subgroups of the resulting cohomology sets, we get the group maps: 
$$ H^r_\et(\CS,\un{\BG}_m)[m] \to H^r_\et(R,\un{\BG}_m)[m] \xrightarrow{N^{(r)}[m]} H^r_\et(\CS,\un{\BG}_m)[m] $$
whose composition is multiplication by $n$ on $H^r_\et(\CS,\un{\BG}_m)[m]$, 
being an automorphism again as $n$ is prime to $m$, whence $N^{(r)}[m]$ is an epimorphism for every $r \geq 0$.  
The same argument applied to $N^{(r)}/m$ shows it is surjective for every $r \geq 0$ as well. 
\end{proof}

Back to the general case ($[R:\CS]$ does not have to be prime to $m$), 
applying the Snake lemma to the exact and commutative diagram of abelian groups: 
\begin{equation} \label{N^2 diagram} 
\xymatrix{                     
1 \ar[r] & \Pic(R)/m  \ar[r] \ar[d]^{N^{(1)}/m} & H^2_\et(R,\un{\mu}_m) \ar[r]^{i_*} \ar[d]^{N^{(2)}} & \Br(R)[m]   \ar[r] \ar[d]^{N^{(2)}[m]} & 1 \\ 
1 \ar[r] & \Pic(\CS)/m \ar[r]       & H^2_\et(\CS,\un{\mu}_m) \ar[r]                      & \Br(\CS)[m] \ar[r]        & 1
}
\end{equation}
yields an exact sequence of $m$-torsion abelian groups: 
\begin{align} \label{i_*'} 
1 &\to \ker(\Pic(R)/m \xrightarrow{N^{(1)}/m} \Pic(\CS)/m) \to \ker(N^{(2)}) \xrightarrow{i_*'} \ker(\Br(R)[m] \xrightarrow{N^{(2)}[m]} \Br(\CS)[m]) \\ \nonumber 
  &\to \cok(\Pic(R)/m \xrightarrow{N^{(1)}/m} \Pic(\CS)/m),  
\end{align}
where $i_*'$ is the restriction of $i_*$ to $\ker(N^{(2)})$. 
Together with the surjection $I^{(2)}:H^2_\et(\CS,\un{F}) \surj \ker(N^{(2)})$ coming from sequence \eqref{LES nqs}, we get the commutative diagram:
\begin{equation} \label{nqs diagram}
\xymatrix{                     
                                                                 & \ker \left(\Pic(R)/m \to \Pic(\CS)/m \right) \ar@{^{(}->}[d]      \\     
H^2_\et(\CS,\un{F}) \ar@{->>}[r]^-{I^{(2)}} \ar[rd]_-{i_*^{(1)}} & \ker \left(H^2_\et(R,\un{\mu}_m) \xrightarrow{N^{(2)}} H^2_\et(\CS,\un{\mu}_m) \right) \ar[d]^{i_*'}   \\ 
                                                                 & \ker \left(\Br(R)[m] \to \Br(\CS)[m] \right).   
} 
\end{equation}

\begin{prop} \label{i_*' surjective}
If $[R:\CS]$ is prime to $m$, then there exists a canonical exact sequence of abelian groups 
$$ 1 \to \ker \left(\Pic(R)/m \to \Pic(\CS)/m \right) \to \ker(N^{(2)}) \xrightarrow{i_*'} \ker \left(\Br(R)[m] \to \Br(\CS)[m] \right) \to 1. $$ 
\end{prop}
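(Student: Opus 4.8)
The plan is to read the assertion directly off the six-term exact sequence produced by the Snake lemma applied to diagram \eqref{N^2 diagram}, of which the four-term sequence \eqref{i_*'} is the initial segment. That segment already supplies exactness at $\ker(\Pic(R)/m \to \Pic(\CS)/m)$ (injectivity of the left-hand map) and at $\ker(N^{(2)})$ (so that $i_*'$, the restriction of $i_*$, has kernel equal to the image of the first map). Hence the only statement in the Proposition not yet contained in \eqref{i_*'} is the surjectivity of $i_*'$; equivalently, the vanishing of the connecting homomorphism $\ker(\Br(R)[m] \to \Br(\CS)[m]) \to \cok(\Pic(R)/m \xrightarrow{N^{(1)}/m} \Pic(\CS)/m)$ that comes next in the Snake-lemma sequence.

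First I would bring in the hypothesis that $[R:\CS]$ is prime to $m$. By Lemma \ref{N surjective} applied with $r=1$, the induced map $N^{(1)}/m : \Pic(R)/m \to \Pic(\CS)/m$ is then surjective, so $\cok(N^{(1)}/m) = 0$. The connecting homomorphism displayed above therefore lands in the zero group and is consequently identically zero. Exactness of \eqref{i_*'} at $\ker(\Br(R)[m] \to \Br(\CS)[m])$ then forces $i_*'$ to be surjective, and the four-term sequence \eqref{i_*'} collapses to exactly the short exact sequence claimed.

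I do not anticipate a genuine obstacle here: the argument is essentially a bookkeeping step resting on the Snake-lemma sequence \eqref{i_*'} together with the surjectivity of $N^{(1)}/m$ from Lemma \ref{N surjective}. The one point worth flagging is that the coprimality hypothesis $[R:\CS]$ prime to $m$ enters precisely once, and only to annihilate $\cok(N^{(1)}/m)$; without it the cokernel need not vanish and $i_*'$ could fail to be onto. This is exactly the reason the non-quasi-split factors in Definition \ref{admissible} are restricted to the case $[R:\CS]$ prime to $m$, and it is what makes the present Proposition the degree-prime-to-$m$ refinement of the general sequence \eqref{i_*'}.
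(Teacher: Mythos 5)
Your proposal is correct and follows essentially the same route as the paper: the paper's proof likewise reads the short exact sequence off the Snake-lemma output \eqref{i_*'} and obtains the surjectivity of $i_*'$ from the surjectivity of $N^{(1)}/m$ supplied by Lemma \ref{N surjective}. Your additional remark pinpointing that the coprimality hypothesis enters only to kill $\cok(N^{(1)}/m)$ is accurate and consistent with the paper's use of the result.
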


\begin{proof}
This sequence is the column in diagram \eqref{nqs diagram} 
since Lemma \ref{N surjective} shows the surjectivity of $N^{(1)}/m$, 
which in turn implies the surjectivity of $i_*'$ by the exactness of sequence \eqref{i_*'}. 
\end{proof}

Recall the definition of $i(\un{F})$ (Def. \ref{i}), and of the maps $i_*$ and $i_*^{(1)}$ (sequences \eqref{Kummer mu2 H1} and \eqref{nqs diagram}). 
  
\begin{definition} \label{i*}   
Let $\un{F}$ be one of the basic factors of an admissible fundamental group (see Def. \ref{admissible}).  
The map $\ov{i}_*:H^2_\et(\CS,\un{F}) \to i(\un{F})$ is defined as:
\begin{align*} 
\ov{i}_* := \left \{ \begin{array}{l l}
i_*        &  \un{F} = \text{Res}_{R/\CS}(\un{\mu}_m), \\
i_*^{(1)}  &  \un{F} = \text{Res}^{(1)}_{R/\CS}(\un{\mu}_m) \ \text{and} \ ([R:\CS],m)=1.   
\end{array}\right.   
\end{align*} 
More generally, if $\un{F} = \prod_{k=1}^r \un{F}_k$ where each $\un{F}_k$ is one of the above, we set it to be the composition:   
$$ \ov{i}_*:H^2_\et(\CS,\un{F}) \xrightarrow{\sim} \bigoplus_{k=1}^r H^2_\et(\CS,\un{F}_k) \xrightarrow{\bigoplus_{k=1}^r (\ov{i}_*)_k} i(\un{F}) = \prod_{k=1}^r i(\un{F}_k). $$ 
\end{definition}

\begin{cor} \label{admissible surjective}
If $\un{F}$ is admissible, then there exists a short exact sequence  
\begin{equation} \label{exact sequence} 
1 \to j(\un{F}) \to H^2_\et(\CS,\un{F}) \xrightarrow{\ov{i}_*} i(\un{F}) \to 1. 
\end{equation} 
\end{cor}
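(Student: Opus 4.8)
The plan is to reduce the general admissible case to its basic building blocks and then treat each block separately using the machinery already assembled. Since $\un{F}$ is admissible, it factors as $\prod_{k=1}^r \un{F}_k$ with each $\un{F}_k$ of type $(1)$ or $(2)$ from Definition \ref{admissible}. Because $\ov{i}_*$, $i(\un{F})$, and $j(\un{F})$ are all defined on products as the corresponding direct products, and because $H^2_\et(\CS,-)$ converts the finite direct product of the $\un{F}_k$ into the direct sum $\bigoplus_k H^2_\et(\CS,\un{F}_k)$ (the isomorphism displayed in Definition \ref{i*}), it suffices to establish the short exact sequence \eqref{exact sequence} for a single factor; the product of short exact sequences of abelian groups is again short exact, so the general case follows formally.

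For a factor of type $(1)$, namely $\un{F} = \text{Res}_{R/\CS}(\un{\mu}_m)$, I would argue as follows. By Shapiro's Lemma \ref{Shapiro} with $p=2$ we have $H^2_\et(\CS,\un{F}) \cong H^2_\et(R,\un{\mu}_m)$. The third of the Kummer sequences \eqref{Kummer mu2 H1} over $R$ reads
\begin{equation*}
1 \to \Pic(R)/m \to H^2_\et(R,\un{\mu}_m) \xrightarrow{i_*} \Br(R)[m] \to 1.
\end{equation*}
In this case $i(\un{F}) = \Br(R)[m]$, $j(\un{F}) = \Pic(R)/m$, and $\ov{i}_* = i_*$ by Definition \ref{i*}, so the Kummer sequence \emph{is} precisely the asserted sequence \eqref{exact sequence}. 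Nothing further is needed beyond transporting it through the Shapiro isomorphism.

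For a factor of type $(2)$, namely $\un{F} = \text{Res}^{(1)}_{R/\CS}(\un{\mu}_m)$ with $[R:\CS]$ prime to $m$, the relevant sequence is exactly the column of diagram \eqref{nqs diagram}, whose exactness is the content of Proposition \ref{i_*' surjective}. Here $i(\un{F}) = \ker(\Br(R)[m] \to \Br(\CS)[m])$ and $j(\un{F}) = \ker(\Pic(R)/m \to \Pic(\CS)/m)$ by Definitions \ref{i} and \ref{admissible}, while $\ov{i}_* = i_*^{(1)}$ by Definition \ref{i*}. The one point requiring care is that the middle term in Proposition \ref{i_*' surjective} is $\ker(N^{(2)})$ rather than $H^2_\et(\CS,\un{F})$ itself; to identify $i_*^{(1)}$ with $i_*'$ I would invoke the surjection $I^{(2)}:H^2_\et(\CS,\un{F}) \surj \ker(N^{(2)})$ coming from the long exact sequence \eqref{LES nqs}, as recorded in diagram \eqref{nqs diagram}, so that $i_*^{(1)} = i_*' \circ I^{(2)}$ inherits both the surjectivity of $i_*'$ and the correct kernel.

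The only genuine subtlety, and hence the main point to verify carefully, is this last identification of kernels: I must check that under the surjection $I^{(2)}$ the preimage of $j(\un{F}) = \ker(\Pic(R)/m \to \Pic(\CS)/m)$ is exactly $\ker(\ov{i}_*) = \ker(i_*^{(1)})$, so that the sequence built from Proposition \ref{i_*' surjective} descends to one with $H^2_\et(\CS,\un{F})$ in the middle. This amounts to a short diagram chase in \eqref{nqs diagram} using that $\ker(I^{(2)})$ maps into $j(\un{F})$ via the top vertical arrow; the hypothesis $([R:\CS],m)=1$ enters precisely to guarantee the surjectivity statements of Lemma \ref{N surjective} on which Proposition \ref{i_*' surjective} rests. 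Everything else is the bookkeeping of assembling the per-factor sequences into a product, which is routine.
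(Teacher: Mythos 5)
Your overall route is the same as the paper's: reduce to the two basic factor types, read off the type-(1) case from the last Kummer sequence in \eqref{Kummer mu2 H1} via Shapiro, and get the type-(2) case from Proposition \ref{i_*' surjective}. The type-(1) case and the formal reduction to a single factor are fine as you state them.

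The one place where your argument as written does not quite close is the type-(2) case. You treat $I^{(2)}:H^2_\et(\CS,\un{F})\to\ker(N^{(2)})$ only as a surjection and propose to ``descend'' the sequence of Proposition \ref{i_*' surjective} along it by a diagram chase in \eqref{nqs diagram}. But if $I^{(2)}$ had a nontrivial kernel this would fail: since $i_*^{(1)}=i_*'\circ I^{(2)}$, one has $\ker(i_*^{(1)})=(I^{(2)})^{-1}\bigl(\ker(i_*')\bigr)$, which is then an extension of $j(\un{F})$ by $\ker(I^{(2)})$ rather than $j(\un{F})$ itself, and there is moreover no canonical map $j(\un{F})\to H^2_\et(\CS,\un{F})$ to put on the left of \eqref{exact sequence}. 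The missing (and decisive) observation is that under the admissibility hypothesis $([R:\CS],m)=1$, Lemma \ref{N surjective} makes $N^{(1)}$ surjective, so the long exact sequence \eqref{LES nqs} breaks into the short exact sequences \eqref{degree is prime to m}; taking $r=2$ shows $I^{(2)}$ is \emph{injective}, hence an isomorphism $H^2_\et(\CS,\un{F})\cong\ker(N^{(2)})$. With that, the sequence of Proposition \ref{i_*' surjective} transports verbatim to the asserted one and no further kernel identification is needed. This is exactly how the paper argues; the fix to your write-up is one line, but as stated the chase you describe would not establish left-exactness.
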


\begin{proof}
If $\un{F} = \text{Res}_{R/\CS}(\un{\mu}_m)$ then the sequence of the corollary is simply a restatement 
of the last sequence in \eqref{Kummer mu2 H1} by the definitions of $i(\un{F})$ and $j(\un{F})$ (see Definition \ref{i}). 
On the other hand, if $\un{F} = \text{Res}_{R/\CS}^{(1)}(\un{\mu}_m)$ with $[R:\CS]$ is prime to $m$, 
then $I^{(2)}$ induces an isomorphism of abelian groups $H^2_\et(\CS,\un{F}) \cong \ker(N^{(2)})$ 
by the exactness of \eqref{degree is prime to m} for $r=2$. 
Thus the sequence of the corollary is isomorphic to the sequence in Proposition \ref{i_*' surjective} again by the definitions of $j(\un{F})$ and $i(\un{F})$. 
The two cases considered above suffice to establish the corollary by the definition of admissible 
(see Def. \ref{admissible}) and the definition of $\ov{i}_*$ (see Def. \ref{i*}).  
\end{proof}

\begin{definition} \label{Euler}
Let $\un{X}$ be a constructible sheaf defined over $\Sp \CS$ and let $h_i(\un{X}) := |H^i_\et(\CS,\un{X})|$. 
The (restricted) \emph{Euler-Poincar\'e characteristic} of $\un{X}$ is defined to be (cf. \cite[Ch.II~\S 2]{Mil2}): 
$$ \chi_S(\un{X}) := \prod_{i=0}^2 h_i(\un{X})^{(-1)^i}. $$ 
\end{definition}

\begin{definition} \label{l}
Let $R$ be a finite \'etale extension of $\CS$. 
We define: 
\begin{align*} 
l(\un{F}) := \left \{ \begin{array}{l l}
\fc{|R^\times[m]|}{[ R^{\times}:(R^{\times})^m ]}     &  \un{F} = \text{Res}_{R/\CS}(\un{\mu}_m) \\ \\
\fc{|\ker(N^{(0)}[m])|}{|\ker(N^{(0)}/m)|}            &  \un{F} = \text{Res}^{(1)}_{R/\CS}(\un{\mu}_m).  
\end{array}\right.  
\end{align*} 
As usual, for $\un{F} = \prod_{k=1}^r \un{F}_k$ where each $\un{F}_k$ is one of the above, 
we put $l(\un{F})=\prod_{k=1}^r l(\un{F}_k)$. 
\end{definition}

\begin{lem} \label{abs almost simple non qs}
If $\un{F}$ is admissible then $\chi_S(\un{F}) = l(\un{F}) \cdot |i(\un{F})|$. 
\end{lem}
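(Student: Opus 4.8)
The plan is to exploit multiplicativity to reduce to the two basic admissible factors, and then to read everything off the Kummer sequences \eqref{Kummer mu2 H1}, the crucial simplification coming from the finiteness of the Picard group. Since étale cohomology of a finite product of abelian $\CS$-group schemes splits as a product (as already used in Definition \ref{i*}), one has $h_i(\un{F})=\prod_k h_i(\un{F}_k)$, hence $\chi_S$ is multiplicative over the factors; as $l(\cdot)$ and $i(\cdot)$ are multiplicative by definition, it suffices to verify the identity for $\un{F}=\text{Res}_{R/\CS}(\un{\mu}_m)$ and for $\un{F}=\text{Res}^{(1)}_{R/\CS}(\un{\mu}_m)$ with $n:=[R:\CS]$ prime to $m$.

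In the quasi-split case $\un{F}=\text{Res}_{R/\CS}(\un{\mu}_m)$, Shapiro's Lemma \ref{Shapiro} gives $h_i(\un{F})=|H^i_\et(R,\un{\mu}_m)|$, and I would read off from the three rows of \eqref{Kummer mu2 H1} the values $h_0=|R^\times[m]|$, $h_1=[R^\times:(R^\times)^m]\cdot|\Pic(R)[m]|$ and $h_2=|\Pic(R)/m|\cdot|\Br(R)[m]|$. The key observation is that $\Pic(R)$ is finite (Remark \ref{Picard group is finite}), so for a finite abelian group $|\Pic(R)/m|=|\Pic(R)[m]|$; thus the Picard contributions to $h_1$ and $h_2$ cancel in $\chi_S(\un{F})=h_0h_2/h_1$, leaving
$$\chi_S(\un{F})=\frac{|R^\times[m]|}{[R^\times:(R^\times)^m]}\cdot|\Br(R)[m]|=l(\un{F})\cdot|i(\un{F})|,$$
since here $i(\un{F})=\Br(R)[m]$ and $l(\un{F})=|R^\times[m]|/[R^\times:(R^\times)^m]$.

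For the non-quasi-split case I would invoke the short exact sequence \eqref{degree is prime to m} of Lemma \ref{N surjective}, which gives $h_i(\un{F})=|H^i_\et(R,\un{\mu}_m)|/|H^i_\et(\CS,\un{\mu}_m)|$ for each $i$; taking the alternating product shows $\chi_S(\un{F})$ equals the ratio of the Euler characteristic of $\un{\mu}_m$ over $R$ to that over $\CS$, each of which is evaluated by the quasi-split formula just obtained. On the other side, surjectivity of $N^{(2)}[m]$ (Lemma \ref{N surjective}) yields $|i(\un{F})|=|\ker N^{(2)}[m]|=|\Br(R)[m]|/|\Br(\CS)[m]|$, while surjectivity of $N^{(0)}[m]$ and $N^{(0)}/m$ yields $|\ker N^{(0)}[m]|=|R^\times[m]|/|\CS^\times[m]|$ and $|\ker N^{(0)}/m|=[R^\times:(R^\times)^m]/[\CS^\times:(\CS^\times)^m]$, so that $l(\un{F})$ is their quotient. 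Forming $l(\un{F})\cdot|i(\un{F})|$ and comparing term by term with the ratio computed for $\chi_S(\un{F})$ gives the claimed equality.

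The only genuinely substantive ingredient is the finiteness-driven identity $|\Pic(R)/m|=|\Pic(R)[m]|$, which is what makes the Picard terms disappear and produces a closed formula; the main obstacle is therefore not a single hard step but keeping the bookkeeping of orders consistent across the two cases, all the surjectivities needed being supplied verbatim by Lemma \ref{N surjective}. I expect the non-quasi-split reduction — correctly matching $\ker N^{(0)}[m]$ and $\ker N^{(0)}/m$ with the unit and torsion contributions in the ratio $\chi_S(\un{F})/|i(\un{F})|$ — to be where care is most required.
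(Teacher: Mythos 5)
Your proposal is correct, and the quasi-split case is verbatim the paper's argument (Shapiro, the three Kummer sequences \eqref{Kummer mu2 H1}, and the cancellation $|\Pic(R)/m|=|\Pic(R)[m]|$ from finiteness of $\Pic(R)$). In the non-quasi-split case you take a genuinely different route through the same ingredients: the paper decomposes each $h_r(\un{F})=|\ker(N^{(r)})|$ individually, using the snake-lemma diagrams \eqref{N2} and Proposition \ref{i_*' surjective} to write $h_1(\un{F})=|\ker(N^{(0)}/m)|\cdot|\ker(N^{(1)}[m])|$ and $h_2(\un{F})=|\ker(N^{(1)}/m)|\cdot|\ker(N^{(2)}[m])|$, and then cancels the two $\ker(N^{(1)})$-factors using finiteness of $\ker(N^{(1)})$. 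You instead take the alternating product of orders across the short exact sequences \eqref{degree is prime to m} all at once, reducing $\chi_S(\un{F})$ to the ratio $\chi$ of $\un{\mu}_m$ over $R$ divided by $\chi$ of $\un{\mu}_m$ over $\CS$, each evaluated by the quasi-split computation, and then match this against $l(\un{F})\cdot|i(\un{F})|$ expressed as ratios via the surjectivities of $N^{(0)}[m]$, $N^{(0)}/m$ and $N^{(2)}[m]$. This buys you a cleaner bookkeeping: you never need diagram \eqref{N2}, Proposition \ref{i_*' surjective}, or the finiteness argument for $\ker(N^{(1)})$; the only thing you should state explicitly is that all six groups $H^i_\et(R,\un{\mu}_m)$ and $H^i_\et(\CS,\un{\mu}_m)$ ($i=0,1,2$) are finite so that the order ratios are legitimate — this follows from finite generation of the unit groups, finiteness of the Picard groups (Remark \ref{Picard group is finite}), and the description of $\Br(\CS)[m]$ and $\Br(R)[m]$ as finite groups used in the proof of Corollary \ref{genera}. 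The paper's version, by contrast, produces along the way the explicit factorizations of $h_1(\un{F})$ and $h_2(\un{F})$ into kernels, which is slightly more informative but not needed for the statement.
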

  
\begin{proof}
It is sufficient to check the assertion for the two basic types of (direct) factors: \\
Suppose $\un{F} = \text{Res}_{R/\CS}(\un{\mu}_m)$.  
Then sequences \eqref{Kummer mu2 H1} together with Shapiro's Lemma give  
\begin{align*}
h_i(\un{F}) = |H^i_\et(R,\un{\mu}_m)| =
\left \{ 
\begin{array}{l l}
|R^\times[m]|,                              & i=0 \\ 
\left[R^{\times}:(R^{\times})^m \right] \cdot |\Pic(R)[m]|, & i=1 \\
|\Pic(R)/m| \cdot |\Br(R)[m]|               & i=2.  
\end{array}\right. 
\end{align*}
So as $\Pic(R)$ is finite (see Remark \ref{Picard group is finite}), $|\Pic(R)[m]| = |\Pic(R)/m|$ and we get: 
\begin{align*} 
\chi_S(\un{F}) := \fc{h_0(\un{F}) \cdot h_2(\un{F})}{h_1(\un{F})} = \fc{|R^\times[m]| \cdot |\Pic(R)/m| \cdot |\Br(R)[m]|}{[ R^{\times}:(R^{\times})^m ] \cdot |\Pic(R)[m]|} 
              = l(\un{F}) \cdot |i(\un{F})|. 
\end{align*}

Now suppose $\un{F} = \text{Res}^{(1)}_{R/\CS}(\un{\mu}_m)$ such that $[R:\CS]$ is prime to $m$.  
By Lemma \ref{N surjective} $N^{(r)},N^{(r)}[m]$ and $N^{(r)}/m$ are surjective for all $r \geq 0$,  
so the long sequence \eqref{LES nqs} is cut into short exact sequences:
\begin{equation} \label{for all r}
\forall r \geq 0: \ 1 \to H^r_\et(\CS,\un{F}) \xrightarrow{I^{(r)}} H^r_\et(R,\un{\mu}_m) \xrightarrow{N^{(r)}} H^r_\et(\CS,\un{\mu}_m) \to 1   
\end{equation}
from which we see that (notice that $N^{(0)}[m]$ coincides with $N^{(0)}$):
\begin{equation} \label{H0(F)}
h_0(\un{F}) = |\ker(R^\times[m] \xrightarrow{N^{(0)}[m]} \CS^\times[m])|.   
\end{equation}
 
The Kummer exact sequences for $\un{\mu}_m$ defined over both $\CS$ and $R$ yield the exact diagram:
\begin{equation} \label{N2}
\xymatrix{
1 \ar[r] & R^\times   /(R^\times)^m   \ar[r] \ar@{->>}[d]^{N^{(0)}/m} & H^1_\et(R,\un{\mu}_m)  \ar@{->>}[d]^{N^{(1)}} \ar[r] & \Pic(R)[m]  \ar@{->>}[d]^{N^{(1)}[m]} \ar[r] & 1\\ 
1 \ar[r] & \CS^\times /(\CS^\times)^m \ar[r]                          & H^1_\et(\CS,\un{\mu}_m)                       \ar[r] & \Pic(\CS)[m]                          \ar[r] & 1
}
\end{equation}
from which we see together with sequence \eqref{for all r} that: 
$$ h_1(\un{F}) = |\ker(N^{(1)})| = |\ker(R^\times/(R^\times)^m \xrightarrow{N^{(0)}/m} \CS^\times /(\CS^\times)^m)| \cdot |\ker(\Pic(R)[m] \xrightarrow{N^{(1)}[m]} \Pic(\CS)[m])|. $$
Similarly, by sequence \eqref{for all r} and Proposition \ref{i_*' surjective} we find that: 
$$ h_2(\un{F}) = |\ker(N^{(2)})| = |\ker(\Pic(R)/m \xrightarrow{N^{(1)}/m} \Pic(\CS)/m)| \cdot |\ker(\Br(R)[m] \xrightarrow{N^{(2)}[m]} \Br(\CS)[m])|. $$
Altogether we get: 
\begin{align*} 
\chi_{S}(\un{F}) = \fc{h_0(\un{F}) \cdot h_2(\un{F})}{h_1(\un{F})} = \fc{|\ker(N^{(0)}[m])|}{|\ker(N^{(0)}/m)|} \cdot \fc{|\ker(N^{(1)}[m])|}{|\ker(N^{(1)}/m)|} \cdot |\ker(N^{(2)}[m])|.  
\end{align*}
The group of units $R^\times$ is a finitely generated abelian group (cf. \cite[Prop.~14.2]{Ros}), thus the quotient $R^\times / (R^\times)^m$ is a finite group.  
Since $\Pic(R)[m]$ is also finite, $\ker(N^{(1)})$ in diagram \eqref{N2} is finite, thus $|\ker(N^{(1)})[m]| = |\ker(N^{(1)})/m|$, and we are left with:
\begin{equation*} 
\chi_{S}(\un{F}) = \fc{|\ker(N^{(0)}[m])|}{|\ker(N^{(0)}/m)|} \cdot |\ker(N^{(2)}[m])|  = l(\un{F}) \cdot |i(\un{F})|.  \hfill \qedhere 
\end{equation*}
\end{proof}

\begin{remark}
The computation of $l(\un{F})$, for specific choices of $R$, $\CS$ and $m$, is an interesting (and probably open) problem. 
For example, when $\un{F}$ is not quasi-split, the denominator of this number is the order of the group of units of $R$  
whose norm down to $\CS$ is an $m$-th power of a unit in $\CS$, modulo $(R^\times)^m$. 
Such computations are hard to find in the literature, if they exist at all. 
\end{remark}

\bk

\section{The set of genera} \label{Section genera}
From now and on we assume $\un{G}$ is semisimple and that its fundamental group $\un{F}$ is of order prime to $\text{char}(K)$, thus smooth. 
\'Etale cohomology applied to the universal covering of $\un{G}$   
\begin{equation} \label{universal covering} 
1 \to \un{F} \to \un{G}^\sc \to \un{G} \to 1,     
\end{equation} 
gives rise to the exact sequence of pointed-sets:  
\begin{equation} \label{universal covering cohomology} 
H^1_\et(\CS,\un{G}^\sc) \to H^1_\et(\CS,\un{G}) \xrightarrow{\dl_{\un{G}}} H^2_\et(\CS,\un{F}) 
\end{equation}
in which the co-boundary map $\dl_{\un{G}}$ is surjective, as the domain $\CS$ is of Douai-type, 
implying that $H^2_\et(\CS,\un{G}^\sc)=1$ (see Definition~5.2 and Example~5.4 (iii) in \cite{Gon}). 

\begin{prop} \label{sequence of wG} 
There exists an exact sequence of pointed-sets:
$$ 1 \to \ClS(\un{G}) \xrightarrow{h} H^1_\et(\CS,\un{G}) \xrightarrow{w_{\un{G}}} i(\un{F}) $$
in which $h$ is injective. 
If $\un{F}$ is admissible, then $w_{\un{G}}$ is surjective. 
\end{prop}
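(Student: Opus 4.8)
The plan is to set $w_{\un G} := \ov{i}_* \circ \dl_{\un G}$, the composite of the connecting map from \eqref{universal covering cohomology} with the map $\ov{i}_*$ of Definition \ref{i*}. Surjectivity for admissible $\un F$ is then immediate: $\dl_{\un G}$ is surjective (as recalled after \eqref{universal covering cohomology}, since $\CS$ is of Douai-type, so that $H^2_\et(\CS,\un G^\sc)=1$), and $\ov{i}_*$ is surjective by Corollary \ref{admissible surjective}, whence so is their composite. For the injectivity of $h$ and the identification of $\ClS(\un G)$ with a kernel I would invoke Nisnevich's sequence \eqref{Nis simple}: as $\un G$ is semisimple its fibers are connected, so $H^1_\et(\hat\CO_\fp,\un G_\fp)=1$ by Lang's theorem and property \eqref{property} holds; hence $h$ embeds $\ClS(\un G)=\ker(\lm_K)$ into $H^1_\et(\CS,\un G)$, where $\lm_K\colon H^1_\et(\CS,\un G)\to H^1(K,G)$ denotes generic restriction.

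The real content is exactness at the middle term, i.e. $\ker(w_{\un G})=\ClS(\un G)$, and the step I would isolate first is the purely cohomological identity $\ker(\ov{i}_*)=\ker\bigl(\mathrm{res}_K\colon H^2_\et(\CS,\un F)\to H^2(K,F)\bigr)$. It suffices to check this factor by factor. For a factor $\un F=\text{Res}_{R/\CS}(\un{\mu}_m)$ one has $H^2_\et(\CS,\un F)\cong H^2_\et(R,\un{\mu}_m)$, with $\ov{i}_*=i_*$ and $\ker(i_*)=\Pic(R)/m$ by \eqref{Kummer mu2 H1}, while $H^2(K,F)\cong H^2(K_R,\un{\mu}_m)$ for $K_R:=R\otimes_\CS K$. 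Comparing the Kummer sequences over $R$ and over $K_R$ (where $\Pic(K_R)=0$), the map $\mathrm{res}_K$ is detected on the Brauer quotient, and since $R$ is regular the restriction $\Br(R)\hookrightarrow\Br(K_R)$ is injective (Auslander--Goldman). Thus $\xi\in\ker(\mathrm{res}_K)$ iff $i_*(\xi)=0$ iff $\xi\in\Pic(R)/m$, giving $\ker(\mathrm{res}_K)=\ker(\ov{i}_*)$. For an admissible factor $\un F=\text{Res}^{(1)}_{R/\CS}(\un{\mu}_m)$ with $([R:\CS],m)=1$, the same comparison — carried through the isomorphism $H^2_\et(\CS,\un F)\cong\ker(N^{(2)})$ of \eqref{degree is prime to m} and the injectivity of both $\Br(R)\hookrightarrow\Br(K_R)$ and $\Br(\CS)\hookrightarrow\Br(K)$ — yields $\ker(\ov{i}_*)=j(\un F)=\ker(\mathrm{res}_K)$.

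With this identity in hand, I would conclude by a chain of equivalences using the base-change square, in which connecting maps commute with restriction to $K$, i.e. $\mathrm{res}_K\circ\dl_{\un G}=\dl_G\circ\lm_K$. For $[X]\in H^1_\et(\CS,\un G)$:
\begin{align*}
[X]\in\ker(w_{\un G}) &\iff \dl_{\un G}([X])\in\ker(\ov{i}_*)=\ker(\mathrm{res}_K) \\
&\iff \dl_G(\lm_K([X]))=0 \iff \lm_K([X])\in\ker(\dl_G).
\end{align*}
The last point is where I would use that $K$ is a function field: by the Hasse principle for simply connected groups (Harder; $K$ has no archimedean places) one has $H^1(K,G^\sc)=1$, so the generic universal-covering sequence forces $\ker(\dl_G)=\Im\bigl(H^1(K,G^\sc)\to H^1(K,G)\bigr)=\{*\}$. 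Hence $[X]\in\ker(w_{\un G})$ iff $\lm_K([X])=*$, i.e. iff $[X]\in\ker(\lm_K)=\ClS(\un G)$ by \eqref{Nis simple}, which is precisely exactness at the middle term.

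I expect the main obstacle to be the factorwise Brauer computation establishing $\ker(\ov{i}_*)=\ker(\mathrm{res}_K)$ — in particular the bookkeeping in the non quasi-split case, where the injectivity of the Brauer groups must be propagated through the norm kernels and the isomorphism $H^2_\et(\CS,\un F)\cong\ker(N^{(2)})$. Everything else is formal once $H^1(K,G^\sc)=1$ is invoked.
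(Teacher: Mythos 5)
Your proposal is correct and follows essentially the same route as the paper: both identify $\ClS(\un{G})$ with $\ker(\lm_K)$ via Nisnevich's sequence, use Harder's vanishing of $H^1(K,G^\sc)$ to control the generic coboundary, and reduce the middle exactness to the injectivity of $\Br(R)\hookrightarrow\Br(L)$ together with the vanishing of $\Pic$ over a field. The only difference is packaging — you isolate the identity $\ker(\ov{i}_*)=\ker(\mathrm{res}_K)$ on $H^2_\et(\CS,\un{F})$, whereas the paper encodes the same comparison in the commutative squares \eqref{Witt diagram qs} and \eqref{Witt diagram nqs} with the injective maps $w_G$ and $j$.
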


\begin{proof} 
It is shown in \cite[Thm.~2.8 and proof of Thm.~3.5]{Nis} that there exist a canonical bijection 
$\a_{\un{G}} : H^1_{\text{Nis}}(\CS,\un{G}) \cong \ClS(\un{G})$ and a canonical injection 
$i_{\un{G}} : H^1_{\text{Nis}}(\CS,\un{G}) \hookrightarrow H^1_\et(\CS,\un{G})$ of pointed-sets (as Nisnevich's covers are \'etale). 
Then the map $h$ of the statement is the composition $i_{\un{G}} \circ \a_{\un{G}}^{-1}$. 

Assume $\un{F} = \text{Res}_{R/\CS}(\un{\mu}_m)$.  
The composition of the surjective map $\dl_{\un{G}}$ from \eqref{universal covering cohomology} 
with Shapiro's isomorphism and the surjective morphism $i_*$ from \eqref{Kummer mu2 H1},  
is a surjective $R$-map:  
\begin{equation} \label{witt invariant}
w_{\un{G}}: H^1_\et(\CS,\un{G}) \xrightarrowdbl{\dl_{\un{G}}} H^2_\et(\CS,\un{F}) \xrightarrow{\sim} H^2_\et(R,\un{\mu}_m) \xrightarrowdbl{i_*} \Br(R)[m].  
\end{equation}
On the generic fiber, since $G^\sc:=\un{G}^\sc \otimes_{\CS} K$ is simply connected, 
$H^1(K,G^\sc)$ vanishes due to Harder (cf. \cite[Satz~A]{Har}), as well as its other $K$-forms (this would not be true, however, if $K$ were a number field with real places).  
So Galois cohomology applied to the universal $K$-covering   
\begin{equation} \label{K universal covering} 
1 \to F \to G^\sc \to G \to 1 
\end{equation}
yields an embedding of pointed-sets $\dl_{G}:H^1(K,G) \hookrightarrow H^2(K,F)$, which is also surjective as $K$ is of Douai-type as well.  
The extension $R$ of $\CS$ arises from an unramified Galois extension $L$ of $K$ by Remark \ref{finite etale extension is embedded in generic fiber},  
and Galois cohomology applied to the Kummer exact sequence of $L$-groups
$$ 1 \to \mu_m \to \BG_m \xrightarrow{x \mapsto x^m} \BG_m \to 1 $$
yields, together with Shapiro's Lemma $H^2(K,F)\cong H^2(L,\un{\mu}_m)$ and Hilbert 90 Theorem, 
the identification $(i_*)_{L}: H^2(K,F) \cong \Br(L)[m]$,   
whence the composition $(i_*)_{L} \circ \dl_{G}$ is an injective $L$-map:  
$$ w_{G}:  H^1(K,G) \xhookrightarrow{\dl_G} H^2(K,F) \stackrel{(i_*)_{L}}{\cong} \Br(L)[m]. $$
Now we know due to Grothendieck that $\Br(R)$ is a subgroup of $\Br(L)$ 
(see \cite[Prop.~2.1]{Gro} and \cite[Example~2.22, case (a)]{Mil1}). 
Altogether we retrieve the commutative diagram of pointed-sets: 
\begin{equation} \label{Witt diagram qs}
\xymatrix{
H^1_\et(\CS,\un{G})   \ar@{->>}[r]^{w_{\un{G}}} \ar[d]^{\lm_K}  & \Br(R)[m] \ar@{^{(}->}[d]^{j}  \\
H^1(K,G)              \ar@{^{(}->}[r]^{w_G}                     & \Br(L)[m],  
}
\end{equation}
from which, together with sequence \eqref{Nis simple} (recall $\un{G}$ has connected fibers), we may observe that: 
$$ \ClS(\un{G}) = \ker(\lm_K) = \ker(w_{\un{G}}). $$ 

When $\un{F} = \text{Res}^{(1)}_{R/\CS}(\un{\mu}_m)$,  
we define the map $w_{\un{G}}$ using diagram \ref{nqs diagram} to be the composition 
\begin{equation} \label{w_G nqs} 
w_{\un{G}} : H^1_\et(\CS,\un{G}) \xrightarrow{\dl_{\un{G}}} H^2_\et(\CS,\un{F}) \xrightarrow{i_*^{(1)}} \ker \left(\Br(R)[m] \xrightarrow{N^{(2)}[m]} \Br(\CS)[m] \right)  
\end{equation}
being surjective by Corollary \ref{admissible surjective} given that $[R:\CS]$ is prime to $m$. 
On the generic fiber, Galois cohomology with Hilbert 90 Theorem give:   
$$ w_{G}: H^1(K,G) \xhookrightarrow{\dl_G} H^2(K,F) \stackrel{(i^{(1)}_*)_K}{\cong} \ker \left(\Br(L)[m] \xrightarrow{N^{(2)}_L[m]} \Br(K)[m] \right). $$
This time we get the commutative diagram of pointed sets: 
\begin{equation} \label{Witt diagram nqs}
\xymatrix{
H^1_\et(\CS,\un{G})   \ar[r]^-{w_{\un{G}}} \ar[d]^{\lm_K}  & \ker \left(\Br(R)[m]  \xrightarrow{N^{(2)}[m]}     \Br(\CS)[m] \right) \ar@{^{(}->}[d]^{j}  \\
H^1(K,G)              \ar@{^{(}->}[r]^-{w_G}               & \ker \left(\Br(L)[m]  \xrightarrow{(N^{(2)}[m])_L} \Br(K)[m]   \right),  
}
\end{equation}
from which we may deduce again that: 
$$ \ClS(\un{G}) \stackrel{\eqref{Nis simple}}{=} \ker(\lm_K) = \ker(w_{\un{G}}). $$

More generally, if $\un{F}$ is a direct product of such basic factors,  
then as the cohomology sets commute with direct products, 
the target groups of $w_{\un{G}}$ and $w_G$ become the product of the target groups of their factors, 
and the same argument gives the last assertion. 
\end{proof}

\begin{cor} \label{genera} 
There is an injection of pointed sets $w_{\un{G}}': \text{gen}(\un{G}) \hookrightarrow i(\un{F})$. \\ 
If $\un{F}$ is admissible then $w_{\un{G}}'$ is a bijection. 
In particular if $\un{F}$ is split, then $|\text{gen}(\un{G})| = |F|^{|S|-1}$.    
\end{cor}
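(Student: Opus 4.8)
The plan is to deduce everything from the commutative square established in the proof of Proposition \ref{sequence of wG}, together with the observation that $\un{G}$, being semisimple, has connected fibers, so that by sequence \eqref{Nis simple} its genera are precisely the fibers of $\lm_K$. The relevant square is diagram \eqref{Witt diagram qs} in the quasi-split case, diagram \eqref{Witt diagram nqs} in the non quasi-split case, and the product of these in general. In every case it reads $w_G \circ \lm_K = j \circ w_{\un{G}}$, where $w_{\un{G}}$ lands in $i(\un{F})$ and both the bottom map $w_G$ and the right map $j$ are injective maps of pointed sets.

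From $w_G \circ \lm_K = j \circ w_{\un{G}}$ and the injectivity of $j$ I first obtain that $\lm_K(x) = \lm_K(y)$ forces $w_{\un{G}}(x) = w_{\un{G}}(y)$; hence $w_{\un{G}}$ is constant on each fiber of $\lm_K$, that is, on each genus, and therefore descends to a map of pointed sets $w_{\un{G}}' : \text{gen}(\un{G}) \to i(\un{F})$. Conversely, if two genera $\lm_K^{-1}([\xi])$ and $\lm_K^{-1}([\eta])$ have the same image under $w_{\un{G}}'$, then choosing representatives $x,y$ the equality $w_{\un{G}}(x) = w_{\un{G}}(y)$ yields $w_G(\lm_K(x)) = w_G(\lm_K(y))$, whence $\lm_K(x) = \lm_K(y)$ by the injectivity of $w_G$, so the two genera coincide; this proves that $w_{\un{G}}'$ is injective. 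When $\un{F}$ is admissible, $w_{\un{G}}$ is surjective by Proposition \ref{sequence of wG}, so its descent $w_{\un{G}}'$ is onto $i(\un{F})$ as well, hence a bijection.

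For the final clause I would compute $|i(\un{F})|$ when $\un{F}$ is split. A split fundamental group is a product $\prod_k \un{\mu}_{m_k}$ (each factor being of type (1) with $R = \CS$), so it is admissible and $i(\un{F}) = \prod_k \Br(\CS)[m_k]$ while $|F| = \prod_k m_k$; it thus suffices to prove $|\Br(\CS)[m]| = m^{|S|-1}$. For this I would invoke the fundamental exact sequence of class field theory for the global function field $K$,
$$ 0 \to \Br(K) \to \bigoplus_{\fp \in \Om} \Br(\hat{K}_\fp) \xrightarrow{\sum \mathrm{inv}_\fp} \Q/\Z \to 0, $$
together with the localization description $\Br(\CS) = \ker\big(\Br(K) \to \bigoplus_{\fp \notin S} \Br(\hat{K}_\fp)\big)$, valid for the regular Dedekind domain $\CS$. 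A Brauer class on $K$ that is unramified outside $S$ is determined by its local invariants at the points of $S$, and these range exactly over the tuples in $\bigoplus_{\fp \in S} \Q/\Z$ summing to zero; hence $\Br(\CS) \cong \ker\big(\bigoplus_{\fp \in S} \Q/\Z \to \Q/\Z\big) \cong (\Q/\Z)^{|S|-1}$, so $\Br(\CS)[m] \cong (\Z/m\Z)^{|S|-1}$ has order $m^{|S|-1}$. Multiplying over the factors gives $|i(\un{F})| = \prod_k m_k^{|S|-1} = |F|^{|S|-1}$, which by the bijection $w_{\un{G}}'$ equals $|\text{gen}(\un{G})|$.

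The descent and the injectivity of $w_{\un{G}}'$ are formal diagram chasing; the one substantive ingredient is the class-field-theoretic computation of $\Br(\CS)[m]$. The main point to handle carefully there is the passage from $\Br(K)$ to $\Br(\CS)$ through the residue maps — namely the justification that the $S$-unramified Brauer classes are exactly those whose invariants are supported on $S$ and sum to zero — which is precisely where the global reciprocity law of $K$ enters.
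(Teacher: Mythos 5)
Your proposal is correct and follows essentially the same route as the paper: the descent of $w_{\un{G}}$ to $\gen(\un{G})$ via the commutative squares \eqref{Witt diagram qs} and \eqref{Witt diagram nqs}, injectivity from the injectivity of $w_G$, surjectivity from Proposition \ref{sequence of wG}, and the count $|\Br(\CS)[m]|=m^{|S|-1}$ in the split case. The only cosmetic difference is that you derive $\Br(\CS)\cong\ker\bigl(\bigoplus_{\fp\in S}\Q/\Z\to\Q/\Z\bigr)$ directly from the fundamental exact sequence of class field theory, whereas the paper cites \cite[Lemma~2.2]{Bit1} for the same fact.
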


\begin{proof}
The commutativity of diagrams \eqref{Witt diagram qs} and \eqref{Witt diagram nqs}
and the injectivity of the map $j$ in them show that $w_{\un{G}}$ is constant on each fiber of $\lm_K$, i.e., on the genera of $\un{G}$.  
Thus $w_G$ induces a map (see Proposition \ref{sequence of wG}): 
$$ w_{\un{G}}' : \text{gen}(\un{G}) \to \Im(w_{\un{G}}) \subseteq i(\un{F}). $$
These diagrams commutativity together with the injectivity of $w_G$ imply the injectivity of $w_{\un{G}}'$. 

If $\un{F}$ is admissible then $\Im(w_{\un{G}}) = i(\un{F})$. 
In particular if $\un{F}$ is split, then $\gen(\un{G}) \cong \prod_{i=1}^r \Br(\CS)[m_i]$.  
It is shown in the proof of \cite[Lemma~2.2]{Bit1} that $\Br(\CS) = \ker \left(\Q/\Z \xrightarrow{\sum_{\fp \in S}\text{Cor}_\fp} \Q/\Z \right)$ 
where $\text{Cor}_\fp$ is the corestriction map at $\fp$.   
So $|\Br(\CS)[m_i]| = m_i^{|S|-1}$ for all $i$ and the last assertion~follows. 
\end{proof}

The following table refers to $\CS$-group schemes whose generic fibers are split, absolutely almost simple and adjoint. 
The right column is Corollary ~\ref{genera}:  

\bk

\begin{center}
{\small
 \begin{tabular}{|c | c | c | } 
 \hline
 Type of $\un{G}$           & $\un{F}$                     & \# $\text{gen}(\un{G})$  \\ \hline  \hline                                   
  ${^1}\text{A}_{n-1}$        & $\un{\mu}_n$                 & $n^{|S|-1}$   \\ \hline                                          
  $\text{B}_n,\text{C}_n,\text{E}_7$        & $\un{\mu}_2$                 & $2^{|S|-1}$   \\ \hline                                                                                             
	${^1}\text{D}_n$   & \begin{tabular}[x]{@{}c@{}}$\un{\mu}_4, \ n=2k+1$ \\ $\un{\mu}_2 \times \un{\mu}_2, \ n=2k$ \end{tabular} & $4^{|S|-1}$ \\ \hline 
	${^1}\text{E}_6$            & $\un{\mu}_3$                 & $3^{|S|-1}$ \\ \hline
  $\text{E}_8,\text{F}_4,\text{G}_2$   & $1$ & $1$ \\ \hline
\end{tabular} }
\end{center}

\begin{lem} \label{H1G iso H2F}
Let $\un{G}$ be a semisimple and almost simple $\CS$-group not of (absolute) type $\text{A}$,    
then $H^1_\et(\CS,\un{G})$ bijects as a pointed-set to the abelian group $H^2_\et(\CS,\un{F})$. 
\end{lem}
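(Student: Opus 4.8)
The plan is to prove that the co-boundary map $\dl_{\un G}\colon H^1_\et(\CS,\un G)\to H^2_\et(\CS,\un F)$ of \eqref{universal covering cohomology} is a bijection; since $\un F$ decomposes into factors of the form $\text{Res}_{R/\CS}(\un\mu_m)$ and $\text{Res}^{(1)}_{R/\CS}(\un\mu_m)$, the target $H^2_\et(\CS,\un F)$ is an abelian group, so this gives exactly the assertion. Surjectivity of $\dl_{\un G}$ is already in hand, $\CS$ being of Douai-type. Hence the entire content is the injectivity of $\dl_{\un G}$.

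I would establish injectivity fiber by fiber, using the twisting formalism for the central extension \eqref{universal covering}. Given a class $[P]\in H^1_\et(\CS,\un G)$, twisting \eqref{universal covering} by the torsor $P$ fixes the central subgroup $\un F$ and yields $1\to\un F\to{}^P\un G^\sc\to{}^P\un G\to 1$; under the twisting bijection $H^1_\et(\CS,{}^P\un G)\cong H^1_\et(\CS,\un G)$ the fiber of $\dl_{\un G}$ over $\dl_{\un G}([P])$ is identified with the fiber of $\dl_{{}^P\un G}$ over the distinguished point, which by exactness of \eqref{universal covering cohomology} applied to ${}^P\un G$ is the image of $H^1_\et(\CS,{}^P\un G^\sc)\to H^1_\et(\CS,{}^P\un G)$. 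Thus it suffices to show that $H^1_\et(\CS,{}^P\un G^\sc)=1$ for every inner form ${}^P\un G$ of $\un G$.

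This is the step where the exclusion of type $\text A$ is essential, and it is the main obstacle. An inner form ${}^P\un G$ has the same absolute type as $\un G$, hence its generic fiber is almost simple and not of type $\text A$. Over each non-archimedean local field $\hat K_s$ with $s\in S$, an almost simple group that is anisotropic must be of type $\text A$ (the anisotropic almost simple groups over a local field are the inner forms $SL_1(D)$ of a central division algebra $D$ together with the anisotropic outer unitary forms, all of type $\text A$). Consequently $({}^P G)_{\hat K_s}$ is isotropic for every $s\in S$, so the isogenous simply connected group ${}^P G^\sc$ is isotropic there as well and $\prod_{s\in S}{}^P G^\sc(\hat K_s)$ is non-compact. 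Lemma \ref{H1=1 sc}, applied to the smooth affine $\CS$-group ${}^P\un G^\sc$ with connected fibers and simply connected almost simple non-compact generic fiber, then gives $H^1_\et(\CS,{}^P\un G^\sc)=1$.

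Putting the two steps together, every fiber of $\dl_{\un G}$ reduces to a single point, so $\dl_{\un G}$ is injective; being also surjective it is the required bijection of pointed sets from $H^1_\et(\CS,\un G)$ onto the abelian group $H^2_\et(\CS,\un F)$. I expect the only delicate input to be the local statement that anisotropy forces type $\text A$ over $\hat K_s$ --- precisely the phenomenon that breaks down for type $\text A$, where $SL_1(D)$ can be anisotropic at $S$ and $G_S$ compact --- for which I would cite the classification of reductive groups over local fields; everything else uses only \eqref{universal covering cohomology}, the twisting bijection, and Lemma \ref{H1=1 sc} already available in the text.
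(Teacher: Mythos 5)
Your proposal is correct and follows essentially the same route as the paper: surjectivity of $\dl_{\un{G}}$ from the Douai-type property, and triviality of every fiber via the twisting bijection $H^1_\et(\CS,\un{G})\cong H^1_\et(\CS,{}^P\un{G})$ combined with the fact that inner forms retain the absolute type, so that ${}^P G^\sc$ is isotropic at $S$ (the paper cites \cite[4.3 and 4.4]{BT} for the local classification fact you invoke) and Lemma \ref{H1=1 sc} gives $H^1_\et(\CS,({}^P\un{G})^\sc)=1$. Your write-up is in fact somewhat more explicit about the fiber identification than the paper's own terse argument, but the mathematical content is identical.
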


\begin{proof} 
Since $G^\sc$ is not of (absolute) type $\text{A}$, it is locally isotropic everywhere (\cite[4.3~and~4.4]{BT}), 
whence $\ker(\dl_{\un{G}}) \subseteq H^1_\et(\CS,\un{G}^\sc)$ vanishes due to Lemma \ref{H1=1 sc}. 
Moreover, for any $\un{G}$-torsor $P$, the base-point change: $\un{G} \mapsto {^P}\un{G}$ defines a bijection of pointed-sets: 
$H^1_\et(\CS,\un{G}) \to H^1_\et(\CS,{^P}\un{G})$ (see in Section \ref{Introduction}).   
But ${^P}\un{G}$ is an inner form of $\un{G}$, thus not of type $\text{A}$ as well, 
hence also $H^1_\et(\CS,({^P}\un{G})^\sc)=1$.     
We get that all fibers of $\dl_{\un{G}}$ in \eqref{universal covering cohomology} are trivial, 
which together with the surjectivity of $\dl_{\un{G}}$ amounts to the asserted bijection.  
\end{proof}

In other words, the fact that $\un{G}$ is not of (absolute) type $\text{A}$ 
guarantees that not only $G^\sc$, but also the universal covering of the generic fiber of inner forms of $\un{G}$      
of other genera are locally isotropic everywhere. 
This provides $H^1_\et(\CS,\un{G})$ the structure of an abelian group. 

\begin{cor} \label{the same cardinality}
If $\un{G}$ is not of (absolute) type $\text{A}$, 
then all its genera share the same cardinality. 
\end{cor}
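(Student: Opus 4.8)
The plan is to transport the statement, via Lemma \ref{H1G iso H2F}, into the abelian group $H^2_\et(\CS,\un{F})$, where ``all fibers have the same size'' reduces to the elementary fact that the nonempty fibers of a homomorphism of abelian groups are the cosets of its kernel. Since $\un{G}$ is not of type $\text{A}$, Lemma \ref{H1G iso H2F} furnishes a bijection of pointed sets $\dl_{\un{G}}\colon H^1_\et(\CS,\un{G}) \xrightarrow{\ \sim\ } H^2_\et(\CS,\un{F})$, so I will use it to regard $H^1_\et(\CS,\un{G})$ as an abelian group.

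Next I would identify the genera as fibers of a single map. Because $\un{G}$ has connected fibers, sequence \eqref{Nis simple} applies, and the remark following it says that two $\un{G}$-torsors lie in the same genus exactly when they are $K$-isomorphic; hence the genera of $\un{G}$ are precisely the nonempty fibers of $\lm_K\colon H^1_\et(\CS,\un{G}) \to H^1(K,G)$. The key point is then to compare $\lm_K$ with a genuine group homomorphism. Base-changing the universal covering \eqref{universal covering} along $\Sp K \to \Sp \CS$ and invoking naturality of the connecting map yields a commutative square
\[
\xymatrix{
H^1_\et(\CS,\un{G}) \ar[r]^-{\dl_{\un{G}}} \ar[d]_{\lm_K} & H^2_\et(\CS,\un{F}) \ar[d]^{\rho} \\
H^1(K,G) \ar[r]^-{\dl_G} & H^2(K,F)
}
\]
in which $\rho$ is the restriction (localization) map on $H^2$, a homomorphism of abelian groups. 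In the proof of Proposition \ref{sequence of wG} it was already established, using Harder's vanishing $H^1(K,G^\sc)=1$ and the fact that $K$ is of Douai-type, that $\dl_G$ is injective.

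I would then read off the fibers. For $x,y \in H^1_\et(\CS,\un{G})$, the injectivity of $\dl_G$ gives $\lm_K(x)=\lm_K(y)$ iff $\dl_G\lm_K(x)=\dl_G\lm_K(y)$, which by commutativity is iff $\rho\,\dl_{\un{G}}(x)=\rho\,\dl_{\un{G}}(y)$. Thus the bijection $\dl_{\un{G}}$ carries the fibers of $\lm_K$ onto the fibers of $\rho$, preserving cardinalities. Consequently each genus corresponds to a nonempty fiber of the homomorphism $\rho$, and every nonempty fiber of $\rho$ is a coset of $\ker(\rho)$, hence of cardinality $|\ker(\rho)|$. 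Therefore all genera of $\un{G}$ share the common cardinality $|\ker(\rho)|$.

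The main obstacle is the middle step, namely matching the pointed-set map $\lm_K$ with the additive map $\rho$; but this rests only on the injectivity of $\dl_G$ (already in hand) together with the naturality of the coboundary under the generic base change $\CS \to K$, so it requires no new input. I note that the argument uses neither admissibility of $\un{F}$ nor surjectivity of $\rho$, consistent with the hypothesis-free form of the statement. One could equivalently replace $\rho$ by the homomorphism $\ov{i}_*$ of Definition \ref{i*}, under which $w_{\un{G}}$ becomes additive and $\ker(\ov{i}_*)=j(\un{F})$ by Corollary \ref{admissible surjective}, giving the common fiber size $|j(\un{F})|$ when $\un{F}$ is admissible.
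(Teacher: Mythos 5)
Your proof is correct and follows essentially the same route as the paper: both transport the genera through the bijection $\dl_{\un{G}}$ of Lemma \ref{H1G iso H2F} into the abelian group $H^2_\et(\CS,\un{F})$ and observe that there they become the nonempty fibers of a group homomorphism, hence cosets of its kernel and all of the same size. The only (immaterial) difference is that you use the localization homomorphism $\rho$ to $H^2(K,F)$ together with the injectivity of $\dl_G$, where the paper factors $w_{\un{G}}$ as $\ov{w}_{\un{G}}\circ\dl_{\un{G}}$ and uses the homomorphism $\ov{w}_{\un{G}}$ to $i(\un{F})$; as you note yourself, the two choices are interchangeable.
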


\begin{proof} 
The map $w_{\un{G}}$ factors through $\dl_{\un{G}}$ (see \eqref{witt invariant} and \eqref{w_G nqs}) 
which is a bijection of pointed-sets in this case by Lemma \ref{H1G iso H2F}.  
So writing: $w_{\un{G}} = \ov{w}_{\un{G}} \circ \dl_{\un{G}}$.  
we get due to Proposition \ref{sequence of wG} the exact sequence of pointed-sets (a-priory, abelian groups): 
\begin{equation*} 
1 \to \ClS(\un{G}) \to H^2_\et(\CS,\un{F}) \xrightarrow{\ov{w}_{\un{G}}} i(\un{F}) 
\end{equation*}
in which all genera, corresponding to the fibers of $w_{\un{G}}$, are of the same cardinality.  
\end{proof}

Following E. Artin in \cite{Art}, we shall say that a Galois extension $L$ of $K$ is \emph{imaginary}  
if no prime of $K$ is decomposed into distinct primes in $L$. 

\begin{remark} \label{imaginary}
If $\un{G}$ is of (absolute) type $\text{A}$, but $S=\{\iy\}$, $G$ is $\hat{K}_\iy$-isotropic,  
and $F$ splits over an imaginary extension of $K$,    
then $H^1_\et(\CS,\un{G})$ still bijects as a pointed-set to $H^2_\et(\CS,\un{F})$. 
\end{remark}

\begin{proof}
As aforementioned, removing one closed point of a projective curve,   
the resulting Hasse domain has a trivial Brauer group. 
Thus $\Br(\CS=\Oiy)=1$, and as $F$ splits over an imaginary extension $L = \BF_q(C')$, 
corresponding to an \'etale extension $R = \BF_q[C'- \{ \iy' \}]$ of $\Oiy$ (see Remark \ref{finite etale extension is embedded in generic fiber})   
where $\iy'$ is the unique prime of $L$ lying above $\iy$, $\Br(R)$ remains trivial. 
This implies by Corollary \ref{genera} that $\un{G}$ has only one genus, namely, the principal one,  
in which the generic fibers of all representatives (being $K$-isomorphic to $G$) are isotropic at $\iy$.  
Then the resulting vanishing of $\ker(\dl_{\un{G}}) \subseteq H^1_\et(\CS,\un{G}^\sc)$ due to Lemma \ref{H1=1 sc}   
is equivalent to the injectivity of $\dl_{\un{G}}$. 
\end{proof}

The following general framework due to Giraud (see \cite[\S 2.2.4]{CF}), 
gives an interpretation of the $\un{G}$-torsors which may help us describe $w_{\un{G}}$  
more concretely. 

\begin{prop} \label{flat classification}
Let $R$ be a scheme and $X_0$ be an $R$-form, namely, 
an object of a fibered category of schemes defined over $R$. 
Let $\textbf{Aut}_{X_0}$ be its $R$-group of automorphisms. 
Let $\mathfrak{Forms}(X_0)$ be the category of $R$-forms that are locally isomorphic for some topology to $X_0$   
and let $\mathfrak{Tors}(\text{Aut}_{X_0})$ be the category of $\text{Aut}_{X_0}$-torsors in that topology.    
The functor 
$$ \p:\mathfrak{Forms}(X_0) \to \mathfrak{Tors}(\textbf{Aut}_{X_0}): \ X \mapsto \textbf{Iso}_{X_0,X} $$ 
is an equivalence of fibered categories.     
\end{prop}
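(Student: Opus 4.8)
The plan is to establish the equivalence by exhibiting an explicit quasi-inverse and checking that the two composites are naturally isomorphic to the identity functors, all compatibly with base change, so as to obtain an equivalence of \emph{fibered} categories rather than merely a fibrewise one. First I would verify that $\p$ is well defined, i.e. that for $X \in \mathfrak{Forms}(X_0)$ the sheaf $\textbf{Iso}_{X_0,X}$ of isomorphisms is genuinely an $\textbf{Aut}_{X_0}$-torsor in the chosen topology. The group $\textbf{Aut}_{X_0}$ acts on $\textbf{Iso}_{X_0,X}$ by precomposition, and this action is simply transitive on sections wherever they exist, since any two isomorphisms $X_0 \to X$ differ by an element of $\textbf{Aut}_{X_0}$. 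By the defining hypothesis $X$ becomes isomorphic to $X_0$ after some covering, so $\textbf{Iso}_{X_0,X}$ acquires a section over that covering and is therefore locally trivial, hence a torsor. Functoriality in $X$ (by postcomposition $u \mapsto f\circ u$) and compatibility with pullback along any $R'\to R$ are immediate from the definition of the $\textbf{Iso}$-sheaf.

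Next I would construct the quasi-inverse $\psi:\mathfrak{Tors}(\textbf{Aut}_{X_0}) \to \mathfrak{Forms}(X_0)$ by twisting. Given a torsor $P$, the canonical left action of $\textbf{Aut}_{X_0}$ on $X_0$ together with the right $\textbf{Aut}_{X_0}$-structure on $P$ produces the contracted product ${}^P X_0 := (P \times_R X_0)/\textbf{Aut}_{X_0}$, formed exactly as the twist ${}^P\un{G}$ was in Section~\ref{Introduction}. Concretely, choosing a covering over which $P$ trivializes yields a cocycle with values in $\textbf{Aut}_{X_0}$, which is precisely a descent datum on $X_0$ relative to that covering, and the descended object is ${}^P X_0$. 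This assignment is again functorial in $P$ and commutes with base change, because both the contracted product and the descent datum do.

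Finally I would produce the two natural isomorphisms $\p \circ \psi \cong \id$ and $\psi \circ \p \cong \id$. For the first, the canonical $\textbf{Aut}_{X_0}$-equivariant map $P \to \textbf{Iso}_{X_0,{}^P X_0}$, sending a local point $p$ to the isomorphism $X_0 \xrightarrow{\sim} {}^P X_0$ it induces, is locally bijective, hence an isomorphism of torsors. For the second, the tautological map ${}^{\textbf{Iso}_{X_0,X}} X_0 \to X$, $(u,x)\mapsto u(x)$, is well defined on the contracted product and is locally an isomorphism, hence an isomorphism of forms; both verifications reduce at once to the trivial torsor (resp. trivial form), where they become tautologies. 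The uniformity of all these maps in the base upgrades the fibrewise equivalence to an equivalence of fibered categories.

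The main obstacle is \emph{effectivity of descent}: for $\psi$ to land in $\mathfrak{Forms}(X_0)$ one needs the descent datum attached to $P$ to be effective, i.e. the fibered category must be a stack for the chosen topology, so that ${}^P X_0$ exists as an actual object (a scheme) and not merely as a formal descent datum. In the setting of this paper all the relevant forms are affine (group) schemes and the topology is the fppf or \'etale topology, where faithfully flat descent along quasi-affine---in particular affine---morphisms is effective; this is what guarantees that ${}^P X_0$ is again an object of $\mathfrak{Forms}(X_0)$ and thereby closes the argument. Everything else is a formal manipulation of torsors and contracted products.
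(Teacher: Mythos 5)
The paper offers no proof of this proposition at all: it is quoted as a known general framework of Giraud, with a pointer to \cite[\S 2.2.4]{CF}, so there is nothing internal to compare against. Your argument is precisely the standard one from that source — $\textbf{Iso}_{X_0,X}$ as a torsor in one direction, the contracted-product twist ${}^{P}X_0$ as quasi-inverse in the other, the two tautological natural isomorphisms checked locally — and you correctly single out effectivity of descent (i.e.\ the fibered category being a stack, automatic here since all objects are affine) as the only non-formal hypothesis, which the paper's loose phrasing leaves implicit.
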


\begin{example} 
Let $(V,q)$ be a regular quadratic $\CS$-space of rank $n \geq 3$    
and let $\un{G}$ be the associated \emph{special orthogonal group} $\SOV$ (see \cite[Definition~1.6]{Con1}). 
It is smooth and connected (cf. \cite[Theorem~1.7]{Con1}), 
and its generic fiber is of type $\text{B}_n$ if $\text{rank}(V)$ is odd, 
and of type ${^1}\text{D}_n$ otherwise. 
In both cases $\un{F} = \un{\mu}_2$, so we assume $\text{char}(K)$ is odd. 
Any such quadratic regular $\CS$-space $(V',q')$ of rank $n$ gives rise to a $\un{G}$-torsor $P$ by 
$$ V' \mapsto P = \textbf{Iso}_{V,V'} $$
where an isomorphism $A:V \to V'$ is a \emph{proper} $q$-isometry, i.e., such that $q' \circ A = q$ and $\det(A)=~1$. 
So $H^1_\et(\CS,\un{G})$ properly classifies regular quadratic $\CS$-spaces that are locally isomorphic to $(V,q)$ in the \'etale topology.  
Then $\dl_{\un{G}}([P])$ is the \emph{second Stiefel-Whitney} class of $P$ in $H^2_\et(\CS,\un{\mu}_2)$, 
classifying $\CS$-Azumaya algebras with involutions (see Def.~1, Remark~3.3 and Prop.~4.5 in \cite{Bit2}),      
and  
$$ w_{\un{G}}([\un{\textbf{SO}}_{q'}]) = 
\left \{ \begin{array}{l l}
[\textbf{C}_0(q')] - [\textbf{C}_0(q)] \in \Br(\CS)[2]  \ &  \ n \ \text{is odd} \\ \notag  
[\textbf{C}(q')] - [\textbf{C}(q)] \in \Br(\CS)[2] &  \ n \ \text{is even} 
\end{array}\right., $$ 
where $\textbf{C}(q)$ and $\textbf{C}_0(q)$ are the Clifford algebra of $q$ and its even part, respectively.     
\end{example}

\begin{example}
Let $\un{G}=\un{\textbf{PGL}}_n$ for $n \geq 2$.   
It is smooth and connected (\cite[Lemma~3.3.1]{Con2}) with $\un{F} = \un{\mu}_n$, so we assume $(\text{char}(K),n)=1$. 
For any projective $\CS$-space of rank $n$, 
by the Skolem-Noether Theorem for unital rings (see \cite[p.145]{Knus}) $\un{\textbf{PGL}}(V) = \textbf{Aut}(\text{End}_{\CS}(V))$. 
It is an inner form of $\un{G}$ obtained for $V = \CS^n$.   
So the pointed-set $H^1_\et(\CS,\un{G})$ classifies the projective $\CS$-modules of rank $n$ up to invertible $\CS$-modules. 
Given such a projective $\CS$-module $V$, the Azumaya $\CS$-algebra $A = \End_{\CS}(V)$ of rank $n^2$   
corresponds to a $\un{G}$-torsor by (see \cite[V,Remarque~4.2]{Gir}):
$$ A \mapsto P = \textbf{Iso}_{\un{M}_n,A} $$
where $\un{M}_n$ is the $\CS$-sheaf of $n \times n$ matrices. Here $w_{\un{G}}([P])=[A]$ in $\Br(\CS)[n]$. 
\end{example}

\bk

\section{The principal genus} \label{Section genus}
In this section, we study the structure of the principal genus $\ClS(\un{G})$. 

\begin{theorem} \label{genus isotropic}
If $\un{F}$ is admissible then there exists a surjection of pointed-sets  
$$ \psi_{\un{G}} : \ClS(\un{G}) \twoheadrightarrow j(\un{F}), $$
being a bijection provided that $G_S$ is non-compact (e.g., $G$ is not anisotropic of type $\text{A}$). 
\end{theorem}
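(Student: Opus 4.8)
The plan is to exhibit $\psi_{\un{G}}$ as the restriction of the coboundary map $\dl_{\un{G}}$ of \eqref{universal covering cohomology} to the principal genus $\ClS(\un{G})$, and then to read off surjectivity from the Douai-type surjectivity of $\dl_{\un{G}}$ and injectivity from strong approximation via a twisting argument.

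First I would record that $w_{\un{G}} = \ov{i}_* \circ \dl_{\un{G}}$ by construction (see \eqref{witt invariant} and \eqref{w_G nqs}), and that Proposition \ref{sequence of wG} identifies $\ClS(\un{G})$ with $\ker(w_{\un{G}})$. Hence for $x \in \ClS(\un{G})$ one has $\ov{i}_*(\dl_{\un{G}}(x)) = w_{\un{G}}(x) = 0$, so $\dl_{\un{G}}(x) \in \ker(\ov{i}_*) = j(\un{F})$, the last equality being the exact sequence of Corollary \ref{admissible surjective}. This makes $\psi_{\un{G}} := \dl_{\un{G}}|_{\ClS(\un{G})}$ a well-defined map into $j(\un{F})$.

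For surjectivity I would use that $\dl_{\un{G}}$ is surjective because $\CS$ is of Douai-type (the remark after \eqref{universal covering cohomology}). Given $c \in j(\un{F}) \subseteq H^2_\et(\CS,\un{F})$, choose $x$ with $\dl_{\un{G}}(x) = c$; then $w_{\un{G}}(x) = \ov{i}_*(c) = 0$ places $x$ in $\ker(w_{\un{G}}) = \ClS(\un{G})$ with $\psi_{\un{G}}(x) = c$. The same computation shows that the whole fiber $\dl_{\un{G}}^{-1}(c)$ over a point of $j(\un{F})$ already lies inside $\ClS(\un{G})$, so injectivity of $\psi_{\un{G}}$ amounts to triviality of these fibers.

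For the bijectivity under the hypothesis that $G_S$ is non-compact, I would invoke the standard twisting description of the fibers of the coboundary map attached to the central extension \eqref{universal covering}: for $x \in \ClS(\un{G})$ represented by a torsor $P$, the twisting bijection $H^1_\et(\CS,{^P}\un{G}) \cong H^1_\et(\CS,\un{G})$ carries $\dl_{\un{G}}^{-1}(\dl_{\un{G}}(x))$ onto the image of $H^1_\et(\CS,({^P}\un{G})^\sc) \to H^1_\et(\CS,{^P}\un{G})$ (here the centrality of $\un{F}$ makes its twist equal to itself). Since $x$ lies in the principal genus, $P$ is generically trivial, so the generic fiber of ${^P}\un{G}$ is $K$-isomorphic to $G$ and $({^P}G)_S \cong G_S$ stays non-compact; Lemma \ref{H1=1 sc} then yields $H^1_\et(\CS,({^P}\un{G})^\sc) = 1$, whence the fiber through $x$ is $\{x\}$. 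As this holds for every $x \in \ClS(\un{G})$, the map $\psi_{\un{G}}$ is injective, hence bijective. The main obstacle I anticipate is setting up the twisting/fiber description cleanly and verifying that a principal-genus twist preserves non-compactness at $S$ (so that strong approximation, packaged in Lemma \ref{H1=1 sc}, applies); the parenthetical case ``$G$ not anisotropic of type $\text{A}$'' is just where this non-compactness is automatic, exactly as in the proof of Lemma \ref{H1G iso H2F}.
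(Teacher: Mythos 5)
Your proposal is correct and follows essentially the same route as the paper: the paper also defines $\psi_{\un{G}}$ by noting that $\ov{i}_*(\dl_{\un{G}}([H]))=0$ for $[H]\in\ClS(\un{G})=\ker(w_{\un{G}})$, so that $\dl_{\un{G}}([H])$ lands in $j(\un{F})=\ker(\ov{i}_*)$, gets surjectivity from the Douai-type surjectivity of $\dl_{\un{G}}$ via the same diagram chase, and proves injectivity when $G_S$ is non-compact by observing that for each $[\un{H}]$ in the principal genus the generic fiber stays $K$-isomorphic to $G$, so $\ker(\dl_{\un{H}})\subseteq H^1_\et(\CS,\un{H}^\sc)$ vanishes by Lemma \ref{H1=1 sc}. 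Your write-up merely makes the twisting description of the fibers more explicit than the paper's terser phrasing.
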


\begin{proof} 
Combining the two epimorphisms 
-- $w_{\un{G}}$ defined in Prop.~\ref{sequence of wG} and $\dl_{\un{G}}$ described in Section~\ref{Section genera} --   
together with the exact sequence \eqref{exact sequence}, yields the exact and commutative diagram: 
\begin{equation} \label{genus diagram}
\xymatrix{
          & 1 \ar[r]  \ar[d]             & H^1_\et(\CS,\un{G}) \ar@{->>}[d]^{\dl_{\un{G}}} \ar@{=}[r]  & H^1_\et(\CS,\un{G}) \ar@{->>}[d]^{w_{\un{G}}} \ar[r] & 1   \\   
1  \ar[r] & j(\un{F}) \ar[r]^-{\partial} & H^2_\et(\CS,\un{F}) \ar[r]^-{\ov{i}_*}                      & i(\un{F})           \ar[r]                           & 1   \\   
}
\end{equation}
in which $\ker(w_{\un{G}}) = \ClS(\un{G})$.  
We imitate the Snake Lemma argument (the diagram terms are not necessarily all groups): 
for any $[H] \in \ClS(\un{G})$ one has $\ov{i}_*(\dl_{\un{G}}([H]))=[0]$, 
i.e., $\dl_{\un{G}}([H])$ has a $\partial$-preimage in $j(\un{F})$ 
which is unique as $\partial$ is a monomorphism of groups. 
This constructed map denoted $\psi_{\un{G}}$ gives rise to an exact sequence of pointed-sets:  
$$ 1 \to \mathfrak{K} 
\to \ClS(\un{G}) \xrightarrow{\psi_{\un{G}}} j(\un{F}) \to 1. $$
If $G_S$ is non-compact, then for any $[\un{H}] \in \ClS(\un{G})$ the generic fiber $H$ is $K$-isomorphic to $G$  
thus $H_S$ is non-compact as well, thus $\ker(H^1_\et(\CS,\un{H}) \xrightarrow{\dl_{\un{H}}} H^2_\et(\CS,\un{F})) \subseteq H^1_\et(\CS,\un{H}^\sc)$   
vanishes by Lemma \ref{H1=1 sc}.     
This means that $\dl_{\un{G}}$ restricted to $\ClS(\un{G})$ is an embedding, 
so $\mathfrak{K}=1$ and $\psi_{\un{G}}$ is a bijection.    
\end{proof}

\begin{remark}
The description of $\ClS(\un{G})$ in Theorem \ref{genus isotropic} holds true also for a disconnected group $\un{G}$ (where $\un{F}$ is the fundamental group of $\un{G}^0$), 
under the hypotheses of Remark \ref{disconnected}. 
\end{remark} 

\begin{definition} 
We say that the \emph{local-global Hasse principle} holds for $\un{G}$ if $h_S(\un{G})=1$. 
\end{definition}

This property means (when $\un{G}$ is connected) that a $\un{G}$-torsor is $\CS$-isomorphic to $\un{G}$ if and only if its generic fiber is $K$-isomorphic to $G$. 
Recall the definition of $j(\un{F})$ from Def.~\eqref{i}.

\begin{cor} \label{criterion}
Suppose $\un{F} \cong \prod_{i=1}^r \text{Res}_{R_i/\CS}(\un{\mu}_{m_i})$ where $R_i$ are finite \'etale extensions of $\CS$.  
If $G_S$ is non-compact, then the Hasse principle holds for $\un{G}$ if and only if $\forall i: (|\Pic(R_i)|,m_i)=1$. 
Otherwise ($G_S$ is compact), 
this principle holds for $\un{G}$ only if $\forall i:(|\Pic(R_i)|,m_i)=1$. \\ 
More generally, if $\un{F}$ is admissible and $G_S$ is non-compact, 
then this principle holds for $\un{G}$ provided that 
for each factor of the form $\text{Res}_{R/\CS}(\un{\mu}_m)$ or $\text{Res}^{(1)}_{R/\CS}(\un{\mu}_m)$ one has: $(|\Pic(R)|,m)=1$.  
\end{cor}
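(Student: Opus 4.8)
The plan is to reduce the Hasse principle, namely $h_S(\un{G})=1$, to a statement about the invariant $j(\un{F})$ using the structural results already established. First I would invoke Theorem \ref{genus isotropic}: since $\un{F}$ is admissible, there is a surjection $\psi_{\un{G}}:\ClS(\un{G}) \twoheadrightarrow j(\un{F})$, and when $G_S$ is non-compact this is a bijection. Hence in the non-compact case $h_S(\un{G})=|\ClS(\un{G})|=|j(\un{F})|$, so the Hasse principle holds if and only if $j(\un{F})$ is trivial. In the compact case we only have the surjection, so $|j(\un{F})|$ divides $h_S(\un{G})$; consequently $h_S(\un{G})=1$ forces $j(\un{F})=1$, giving only the stated necessary condition.

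The remaining task is to translate the vanishing of $j(\un{F})$ into the arithmetic condition on the $R_i$ and the $m_i$. For a quasi-split factor $\un{F}=\text{Res}_{R/\CS}(\un{\mu}_m)$, Definition \ref{i} gives $j(\un{F})=\Pic(R)/m$, so $j(\un{F})=1$ is equivalent to $\Pic(R)$ being $m$-divisible; since $\Pic(R)$ is finite (Remark \ref{Picard group is finite}), this is equivalent to $(|\Pic(R)|,m)=1$. Thus for $\un{F}\cong\prod_{i=1}^r \text{Res}_{R_i/\CS}(\un{\mu}_{m_i})$ one has $j(\un{F})=\prod_{i=1}^r \Pic(R_i)/m_i$, which is trivial exactly when $(|\Pic(R_i)|,m_i)=1$ for every $i$. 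Combined with the previous paragraph, this establishes the first two assertions: the \emph{iff} when $G_S$ is non-compact, and the \emph{only if} when $G_S$ is compact.

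For the general admissible case I would argue factor by factor, using that $j$ and the relevant cohomology commute with direct products. For a non quasi-split factor $\un{F}=\text{Res}^{(1)}_{R/\CS}(\un{\mu}_m)$ with $[R:\CS]$ prime to $m$, Definition \ref{i} gives $j(\un{F})=\ker\!\big(\Pic(R)/m \xrightarrow{N^{(1)}/m} \Pic(\CS)/m\big)$, which is a subgroup of $\Pic(R)/m$. Therefore the hypothesis $(|\Pic(R)|,m)=1$ again forces $\Pic(R)/m=1$, hence $j(\un{F})=1$ for this factor as well. Taking the product over all factors, the stated divisibility condition $(|\Pic(R)|,m)=1$ for each factor yields $j(\un{F})=1$, and since $G_S$ is non-compact the bijection of Theorem \ref{genus isotropic} gives $h_S(\un{G})=1$. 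I expect no serious obstacle here: the main subtlety is purely bookkeeping, namely being careful that in the non quasi-split case $j(\un{F})$ is only a \emph{kernel} inside $\Pic(R)/m$, so that the coprimality condition is sufficient but a priori not necessary, which is exactly why the general statement is phrased with ``provided that'' rather than as an equivalence.
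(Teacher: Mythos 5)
Your proof is correct and follows essentially the route the paper intends (the corollary is stated there without a written proof, as an immediate consequence of Theorem \ref{genus isotropic} and Definition \ref{i}): reduce the Hasse principle to the vanishing of $j(\un{F})$, use the bijection in the non-compact case and mere surjectivity in the compact case, and observe that for the finite group $\Pic(R_i)$ one has $\Pic(R_i)/m_i=1$ if and only if $(|\Pic(R_i)|,m_i)=1$, with the kernel description of $j$ for the $\text{Res}^{(1)}$ factors explaining why the general statement is only a sufficient condition. The one small overstatement is the claim that $|j(\un{F})|$ \emph{divides} $h_S(\un{G})$ in the compact case --- a surjection of finite pointed sets only yields $|j(\un{F})|\le h_S(\un{G})$ --- but this does not affect the ``only if'' conclusion you draw from it.
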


\begin{example} 
If $C^\af$ is an affine non-singular $\BF_q$-curve of the form $y^2=x^3+ax+b$,  
i.e., obtained by removing some $\BF_q$-rational point $\iy$ from an elliptic (projective) $\BF_q$-curve $C$,   
then $\Pic(C^\af) = \Pic(\Oiy) \cong C(\BF_q)$ (cf. e.g., \cite[Example~4.8]{Bit1}). 
Let again $\un{G} = \un{\textbf{PGL}}_n$ such that $(\text{char}(K),n)=~1$. 
As $|S|=1$ and $\un{F}$ is split, 
$\un{G}$ admits a single genus (Corollary \ref{genera}), 
which means that all projective $\Oiy$-modules of rank $n$ are $K$-isomorphic.  
If $\un{G}$ is $K$-isotropic, according to Theorem \ref{genus isotropic},  
there are exactly $|C^\af(\BF_q)/2|$ $\Oiy$-isomorphism classes of such modules,   
so the Hasse principle fails for $\un{G}$ if and only if $|C^\af(\BF_q)|$ is even.  
This occurs exactly when $C^\af$ has at least one $\BF_q$-point on the $x$-axis (thus of order $2$).  

On the other hand, take $\CS=\BF_3[t,t^{-1}]$ obtained by removing $S=\{t,t^{-1}\}$ from the projective $\BF_3$-line,   
and $\un{G}=\un{\textbf{PGL}}_n$ to be rationally isotropic over $\CS$: for example for $n=2$,  
it is isomorphic to the special orthogonal group of the standard split $\CS$-form $q_3(x_1,x_2,x_3)=x_1x_2+x_3^2$.  
Then as $q_3$ is rationally isotropic over $\CS$ (e.g., $q_3(1,2,1)=0$) and $\CS$ is a UFD,  
according to Corollary~\ref{criterion} 
the Hasse-principle holds for $\un{G}$ and there are two genera as $|F|=|S|=2$ (Cor. \ref{genera}).  
\end{example}

\begin{example} \label{non-split D}
Let $(V,q)$ be an $\CS$-regular quadratic form of even rank $n=2k \geq 4$ 
and let $\un{G} = \text{Res}_{R/\CS}(\SOV)$ where $R$ is finite \'etale over $\CS$. 
Then $\un{F} = \text{Res}_{R/\CS}(\un{\mu}_2)$,  
whence according to Corollary \ref{genera}, $\text{gen}(\un{G}) \cong \Br(R)[2]$. 
As $G$ and its twisted $K$-forms are $K$-isotropic (e.g., \cite[p.352]{PR}), 
each genus of $q$ contains exactly $\Pic(R)/2$ elements.  
\end{example}

\begin{example} \label{non-split A} 
Let $C'$ be an elliptic $\BF_q$-curve and $(C')^\af := C' - \{\iy'\}$. 
Then $R := \BF_q[(C')^\af]$ is a quadratic extension of $\Oiy = \BF_q[x]$ where $\iy = (1/x)$ and $\iy'$ is the unique prime lying above $\iy$, 
thus $L:= R \otimes_{\Oiy} K$ is imaginary over $K$. 
Let $\un{G} = \text{Res}_{R/\Oiy}(\un{\textbf{PGL}}_m)$, $m$ is odd and prime to $q$.  
Then $\un{F} = \text{Res}^{(1)}_{R/\Oiy}(\un{\mu}_m)$ is smooth, 
and $\un{G}$ is smooth and quasi-split as well as its generic fiber, thus is $K$-isotropic. 
By Remark \ref{imaginary} and sequence \eqref{exact sequence}, we get (notice that $\Oiy$ is a PID and that $\Br(R)=1$): 
$$ \ClS(\un{G}) = H^1_\et(\Oiy,\un{G}) \cong H^2_\et(\Oiy,\un{F}) \cong \ker(\Pic(R)/m \to \Pic(\Oiy)/m) = \Pic(R)/m. $$      
Hence the Hasse-principle holds for $\un{G}$ if and only if $|\Pic(R)|=|C'(\BF_q)|$ is prime to $m$. 
\end{example}

\bk

\section{The Tamagawa number of twisted groups} \label{Section application}
In this section we start with the generic fiber.   
Let $G$ be a semisimple group defined over a global field $K = \BF_q(C)$ with fundamental group $F$.  
The \emph{Tamagawa number} $\tau(G)$ of $G$ is defined as the covolume of the group $G(K)$ 
in the adelic group $G(\mathbb{A})$ (embedded diagonally as a discrete subgroup),  
with respect to the Tamagawa measure (see \cite{Weil}). 
T. Ono has established in \cite{Ono} a formula for the computation of $\tau(G)$ in case $K$ is an algebraic number field,   
which was later proved by Behrend and Dhillon in \cite[Theorem~6.1]{BD} also in the function field case: 
\begin{equation} \label{Ono}
\tau(G) = \fc{|\wh{F}^\fg|}{|\Sh^1(\wh{F})|} 
\end{equation}
where $\wh{F} := \Hom(F \otimes K^s,\BG_m)$, $\fg$ is the absolute Galois group of $K$, 
and $\Sh^1(\wh{F})$ is the first Shafarevitch--Tate group assigned to $\wh{F}$ over $K$. 
As a result, if $F$ is split, then  $\tau(G)=|F|$. 
So our main innovation, based on the above results and the following ones, 
would be simplifying the computation of $\tau(G)$ in case $F$ is not split, as may occur when $G$ is a twisted group. 

\bk

The following construction, as described in \cite{BK} and briefly revised here, 
expresses the global invariant $\tau(G)$ using some local data. 
Suppose $G$ is almost simple defined over the above $K=\BF_q(C)$, not anisotropic of type $\text{A}$, 
such that $(|F|,\text{char}(K))=1$. 
We remove one arbitrary closed point $\iy$ from $C$ and refer as above to the integral domain $\CS = \Oiy$. 
At any prime $\fp \neq \iy$, we consider the Bruhat-Tits $\CO_\fp$-model of $G_\fp$ corresponding to some special vertex in its associated building. 
Patching all these $\CO_\fp$-models along the generic fiber  
results in an affine and smooth $\Oiy$-model $\un{G}$ of $G$ (see \cite[\S 5]{BK}). 
It may be locally disconnected only at places that ramify over a minimal splitting field $L$ of $G$ (cf. \cite[4.6.22]{BT}).  

\bk

Denote $\A_\iy := \A_{\{\iy\}} =  \hat{K}_\iy \times \prod_{\fp \neq \iy} \hat{\CO}_\fp \subset \A$. 
Then $\un{G}(\A_\iy) G(K)$ is a normal subgroup of $\un{G}(\A)$ (cf. \cite[Thm.~3.2~3]{Tha}).  
The set of places $\text{Ram}_G$ that ramify in $L$ is finite, 
thus by the Borel density theorem (e.g., \cite[Thm.~2.4,~Prop.~2.8]{CM}),  
$\un{G}(\CO_{\{\iy\} \cup \text{Ram}_G})$ is Zariski-dense in $\prod_{\fp \in \text{Ram}_G \backslash \{ \iy\}} \un{G}_\fp$. 
This implies that $\un{G}(\A_\iy)G(K) = \un{G}^0(\A_\iy)G(K)$,   
where $\un{G}^0$ is the connected component of $\un{G}$. 

Since all fibers of $\varphi$ are isomorphic to $\ker(\varphi)$, we get a bijection of measure spaces   
\begin{align} \label{decompositionfirst}
\un{G}(\A)/G(K) &\cong \left( \un{G}(\A) / \un{G}(\A_\iy)G(K) \right) \times \left( \un{G}(\A_\iy)G(K) / G(K) \right) \\ \notag
                &= \left( \un{G}^0(\A) / \un{G}^0(\A_\iy)G(K) \right) \times \left( \un{G}^0(\A_\iy)G(K) / G(K) \right) \\ \notag
                &\cong \text{Cl}_{\{\iy\}}(\un{G}^0) \times \left( \un{G}^0(\A_\iy) / \un{G}^0(\A_\iy) \cap G(K) \right)  
\end{align}  
in which the left factor cardinality is the finite index $h_\iy(G):= h_{\{\iy\}}(\un{G}^0)$ (see Section \ref{Section: class set}),    
and in the right factor $\un{G}^0(\A_\iy) \cap G(K) = \un{G}^0(\Oiy)$. 
Due to the Weil conjecture stating that $\tau(G^\sc)=1$,  
as was recently proved in the function field case by Gaistgory and Lurie (see \cite[(2.4)]{Lur}),  
applying the Tamagawa measure $\tau$ on these spaces results in the Main Theorem in \cite{BK}:

\begin{theorem} \label{tau G} 
Let $\fg_\iy = \text{Gal}(\hat{K}_\iy^s/\hat{K}_\iy)$ be the Galois absolute group, 
$F_\iy:=\ker(G^\sc_\iy \to G_\iy)$,  
$\un{F}:= \ker(\un{G}^\sc \to \un{G})$ whose order is prime to $\text{char}(K)$,  
and $\wh{F_\iy} := \Hom(F_\iy \otimes \hat{K}_\iy^s,\BG_{m,\hat{K}_\iy^s})$. 
Then 
$$ \tau(G) =  h_\iy(G) \cdot \fc{t_\iy(G)}{j_\iy(G)}, $$  
where  
$t_\iy(G) = |\wh{F_\iy}^{\fg_\iy}|$ is the number of types in one orbit of a special vertex, 
in the Bruhat--Tits building associated to $G_\iy(\hat{K}_\iy)$,   
and $j_\iy(G) = h_1(\un{F}) / h_0(\un{F})$.   
\end{theorem}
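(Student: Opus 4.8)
The plan is to apply the Tamagawa measure $\tau$ to the measure-space decomposition \eqref{decompositionfirst} and then identify the two resulting factors. Since the left factor $\text{Cl}_{\{\iy\}}(\un{G}^0)$ is finite of cardinality $h_\iy(G)$ and every fibre of $\varphi$ is a translate of $\ker\varphi$, integrating $\tau$ over \eqref{decompositionfirst} gives $\tau(G) = \text{vol}(\un{G}(\A)/G(K)) = h_\iy(G)\cdot V$, where $V := \text{vol}(\un{G}^0(\A_\iy)/\un{G}^0(\Oiy))$. The theorem thus reduces to the single identity $V = t_\iy(G)/j_\iy(G)$, i.e. $V = t_\iy(G)\cdot h_0(\un{F})/h_1(\un{F})$.

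To compute $V$ I would compare $\un{G}^0$ with its simply connected cover through the central isogeny $1\to\un{F}\to\un{G}^\sc\xrightarrow{\pi}\un{G}^0\to 1$, choosing the Tamagawa measures compatibly via invariant top-forms with $\pi^*\omega_{\un{G}^0}=\omega_{\un{G}^\sc}$. For the cover one has $V^\sc := \text{vol}(\un{G}^\sc(\A_\iy)/\un{G}^\sc(\Oiy)) = 1$: strong approximation for the isotropic group $G^\sc$ (equivalently Lemma~\ref{H1=1 sc}) forces $h_\iy(G^\sc)=1$, so $V^\sc=\tau(G^\sc)$, which equals $1$ by the Weil conjecture cited above. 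Hence it suffices to show $V/V^\sc = t_\iy(G)\cdot h_0(\un{F})/h_1(\un{F})$, and I would extract this ratio from the two exact sequences attached to $\pi$ — the adelic one over $\A_\iy$ and the rational one over $\Oiy$ — read against the compatible measures, using that at each place the local ratio $\mu_{\un{G}^0}/\mu_{\un{G}^\sc}$ of the two groups equals $[\text{image index}]/[\text{kernel order}]$.

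The local factor is assembled place by place. For $\fp\neq\iy$ one uses the integral models: $\un{G}^\sc$ has connected fibres, so Lang's theorem gives $H^1_\et(\hat{\CO}_\fp,\un{G}^\sc)=1$, whence $[\un{G}^0(\hat{\CO}_\fp):\pi\un{G}^\sc(\hat{\CO}_\fp)] = |H^1_\et(\hat{\CO}_\fp,\un{F})|$; since $\hat{\CO}_\fp$ is Henselian with finite residue field, $H^i_\et(\hat{\CO}_\fp,\un{F})$ is computed by the Frobenius acting on a finite module, so fixed points and coinvariants have equal order and the local ratio $|H^1_\et(\hat{\CO}_\fp,\un{F})|/|H^0_\et(\hat{\CO}_\fp,\un{F})|$ is $1$. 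At $\iy$ one uses the full local field: Kneser's theorem $H^1(\hat{K}_\iy,G^\sc)=1$ yields image index $|H^1(\hat{K}_\iy,F)|$, the local Euler--Poincar\'e formula (valid as $|F|$ is prime to $\text{char}(K)$) gives $|H^0(\hat{K}_\iy,F)|\cdot|H^2(\hat{K}_\iy,F)|=|H^1(\hat{K}_\iy,F)|$, and local duality gives $|H^2(\hat{K}_\iy,F)|=|\wh{F_\iy}^{\fg_\iy}|=t_\iy(G)$; hence the $\iy$-factor is $t_\iy(G)$. The rational factor comes from $1\to\un{F}(\Oiy)\to\un{G}^\sc(\Oiy)\to\un{G}^0(\Oiy)\to H^1_\et(\Oiy,\un{F})\to H^1_\et(\Oiy,\un{G}^\sc)$, whose last term vanishes by Lemma~\ref{H1=1 sc}; so on $\Oiy$-points $\pi$ has kernel of order $h_0(\un{F})$ and cokernel $H^1_\et(\Oiy,\un{F})$ of order $h_1(\un{F})$, contributing $h_0(\un{F})/h_1(\un{F})=1/j_\iy(G)$. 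Multiplying gives $V = t_\iy(G)\cdot h_0(\un{F})/h_1(\un{F}) = t_\iy(G)/j_\iy(G)$, and with the first paragraph $\tau(G)=h_\iy(G)\cdot t_\iy(G)/j_\iy(G)$.

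The main obstacle is the measure-theoretic comparison across the $0$-dimensional kernel $\un{F}$: because $\un{F}(\A_\iy)$ is compact but null for the $\dim G$-dimensional Tamagawa measure, the naive fibre-cardinality argument for $\pi$ fails, and both the product of local indices and the lattice index are individually infinite. The real content is that these divergences cancel against the convergence factors exactly as in the known evaluation $\tau(G^\sc)=1$, leaving the finite regularized ratio computed above. A point I would verify carefully is that no Tate--Shafarevich or class-number term leaks into $V$; this holds precisely because the decomposition \eqref{decompositionfirst} has already isolated all such arithmetic in the finite factor $h_\iy(G)$, so that only the Euler--Poincar\'e datum $h_0(\un{F})/h_1(\un{F})$ of $\un{F}$ over $\Oiy$, together with the purely local $t_\iy(G)$ at $\iy$, survives in $V$.
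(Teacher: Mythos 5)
Your proposal follows essentially the same route as the paper: the paper's entire argument for this theorem consists of the decomposition \eqref{decompositionfirst} together with the Weil conjecture $\tau(G^{\sc})=1$, with the remaining volume computation deferred to the Main Theorem of \cite{BK}, and your comparison of $\un{G}^0$ with $\un{G}^{\sc}$ via local indices (ratio $1$ at each $\fp\neq\iy$ by Lang's theorem and the Frobenius fixed-point/coinvariant count, the factor $t_\iy(G)$ at $\iy$ by local Euler--Poincar\'e and duality, and $h_0(\un{F})/h_1(\un{F})$ from the $\Oiy$-points sequence) is a faithful reconstruction of that deferred computation. The one point you already flag yourself---the rigorous regularization of the divergent adelic cokernel of the isogeny---is asserted rather than carried out, but that is precisely the content the paper outsources to \cite{BK}, so your argument is at least as complete as the paper's own.
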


We adopt Definition \ref{admissible} of being admissible to $F$, with a Galois extension $L/K$ replacing $R/\CS$. 
If $\un{G}$ is not of (absolute) type $\text{A}$ and $F$ is admissible, 
then due to the above results Theorem \ref{tau G} can be reformulated involving the fundamental group data only: 

\begin{theorem} \label{tau G 2} 
Let $G$ be an almost-simple group not of (absolute) type $\text{A}$ defined over $K=\BF_q(C)$ 
with an admissible fundamental group $F$ whose order is prime to $\text{char}(K)$.  
Then for any choice of a prime $\iy$ of $K$ one has: 
$$ \tau(G) = \fc{\chi_{\{\iy\}}(\un{F})}{|i(\un{F})|} \cdot |\wh{F_\iy}^{\fg_\iy}| = l(\un{F}) \cdot |\wh{F_\iy}^{\fg_\iy}|, $$  
where $\chi_{\{\iy\}}(\un{F})$ is the (restricted) Euler-Poincar\'e characteristic (cf. Definition \ref{Euler}),  
$i(\un{F})$ and $l(\un{F})$ are as in Definitions \ref{i} and \ref{l}, respectively, and the right factor is a local invariant.   
\end{theorem}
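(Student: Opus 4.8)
The plan is to start from Theorem~\ref{tau G}, which already gives
$$ \tau(G) = h_\iy(G) \cdot \fc{t_\iy(G)}{j_\iy(G)}, $$
and to rewrite each of the three factors purely in terms of the fundamental group data. The local factor $t_\iy(G) = |\wh{F_\iy}^{\fg_\iy}|$ needs no change; it remains untouched as the asserted local invariant. The work is therefore concentrated in expressing the product $h_\iy(G)/j_\iy(G)$ as $\chi_{\{\iy\}}(\un{F})/|i(\un{F})|$, and then, in a final step, identifying this ratio with $l(\un{F})$.

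First I would handle $h_\iy(G)$. By definition $h_\iy(G) = h_{\{\iy\}}(\un{G}^0) = |\ClS(\un{G}^0)|$ with $\CS = \Oiy$. Since $G$ is not of (absolute) type $\text{A}$ and $F$ is admissible, Theorem~\ref{genus isotropic} applies (note $G_S = G(\hat K_\iy)$ is non-compact because $G$ is isotropic at $\iy$ by the local isotropy of non-type-$\text{A}$ simply connected groups, \cite[4.3~and~4.4]{BT}, together with Lemma~\ref{H1=1 sc}), giving a bijection $\ClS(\un{G}^0) \cong j(\un{F})$, so $h_\iy(G) = |j(\un{F})|$. Next, $j_\iy(G) = h_1(\un{F})/h_0(\un{F})$ by the statement of Theorem~\ref{tau G}. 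Substituting both identities yields
$$ \tau(G) = |j(\un{F})| \cdot \fc{h_0(\un{F})}{h_1(\un{F})} \cdot t_\iy(G). $$
The key algebraic step is then to recognize, via the short exact sequence \eqref{exact sequence} of Corollary~\ref{admissible surjective}, that $|j(\un{F})| = h_2(\un{F})/|i(\un{F})|$, since that sequence reads $1 \to j(\un{F}) \to H^2_\et(\CS,\un{F}) \xrightarrow{\ov{i}_*} i(\un{F}) \to 1$ and all three terms are finite groups. Plugging this in gives
$$ \tau(G) = \fc{h_0(\un{F}) \cdot h_2(\un{F})}{h_1(\un{F})} \cdot \fc{t_\iy(G)}{|i(\un{F})|} = \fc{\chi_{\{\iy\}}(\un{F})}{|i(\un{F})|} \cdot |\wh{F_\iy}^{\fg_\iy}|, $$
recognizing the Euler--Poincar\'e characteristic of Definition~\ref{Euler}. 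The final equality $\chi_{\{\iy\}}(\un{F})/|i(\un{F})| = l(\un{F})$ is then immediate from Lemma~\ref{abs almost simple non qs}, which asserts precisely $\chi_S(\un{F}) = l(\un{F}) \cdot |i(\un{F})|$ for admissible $\un{F}$ (here $S = \{\iy\}$). As usual, it suffices to verify the identities on each basic factor of the admissible product and invoke multiplicativity of all the quantities involved.

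The main obstacle I anticipate is not any single computation but rather the careful verification of the hypotheses that let the earlier machinery apply at $S = \{\iy\}$: one must check that $\un{G}^0$ genuinely satisfies the running assumptions of Theorem~\ref{genus isotropic} (semisimplicity of the connected model, admissibility of its fundamental group, and non-compactness of $G_S$), and that the possibly disconnected model $\un{G}$ produced by the Bruhat--Tits patching does not disrupt the class-set identification — this is exactly where Remark~\ref{disconnected} and the density argument preceding Theorem~\ref{tau G} (showing $\un{G}(\A_\iy)G(K) = \un{G}^0(\A_\iy)G(K)$) are needed to guarantee $\ClS(\un{G}^0)$ is the correct object. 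Once these compatibility checks are in place, the proof is a short chain of substitutions, so I expect the bookkeeping of hypotheses rather than the algebra to be the genuinely delicate part.
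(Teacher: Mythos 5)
Your argument is correct and ends with the same two substitutions the paper makes --- $j_\iy(G)=h_1(\un{F})/h_0(\un{F})$ and $h_\iy(G)=h_2(\un{F})/|i(\un{F})|$ --- followed by Lemma \ref{abs almost simple non qs}; the only genuine difference is how you obtain the second identity. The paper computes $h_\iy(G)=|\text{Cl}_{\{\iy\}}(\un{G})|$ as $|H^1_\et(\Oiy,\un{G})|/|\text{gen}(\un{G})|$, invoking Corollary \ref{the same cardinality} (all genera have equal cardinality), Lemma \ref{H1G iso H2F} (so that $|H^1_\et(\Oiy,\un{G})|=h_2(\un{F})$) and Corollary \ref{genera} (so that $|\text{gen}(\un{G})|=|i(\un{F})|$). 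You instead apply Theorem \ref{genus isotropic} to get $\text{Cl}_{\{\iy\}}(\un{G}^0)\cong j(\un{F})$ and then read off $|j(\un{F})|=h_2(\un{F})/|i(\un{F})|$ from the finiteness of the terms in the exact sequence \eqref{exact sequence}. The two routes rest on the same underlying inputs (local isotropy everywhere of groups not of absolute type $\text{A}$, feeding into Lemma \ref{H1=1 sc}, plus Corollary \ref{admissible surjective}), and yours is marginally more economical and more scrupulous about the possibly disconnected Bruhat--Tits model, since Theorem \ref{tau G} defines $h_\iy(G)$ via $\un{G}^0$ while the paper's proof silently writes $\text{Cl}_{\{\iy\}}(\un{G})$. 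One small slip worth fixing: Lemma \ref{H1=1 sc} is not what establishes that $G(\hat{K}_\iy)$ is non-compact --- that follows from \cite[4.3~and~4.4]{BT} alone, since a simple group not of type $\text{A}$ is isotropic over every local completion; the lemma is rather what the non-compactness is subsequently used for inside the proof of Theorem \ref{genus isotropic}.
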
  

\begin{proof}
If $G$ is not of (absolute) type $\text{A}$, according to Corollary \ref{the same cardinality} all genera of $\un{G}$ have the same cardinality. 
By Lemma \ref{H1G iso H2F} and Corollary \ref{genera} ($\un{F}$ is admissible as $F$ is, see Remark \ref{finite etale extension is embedded in generic fiber}) we then get  
$$ h_\iy(G) = |\text{Cl}_{\{\iy\}}(\un{G})| = \fc{|H^1_\et(\Oiy,\un{G})|}{|\text{gen}(\un{G})|} = \fc{h_2(\un{F})}{|i(\un{F})|}. $$
Now the first asserted equality follows from Theorem \ref{tau G} together with Definition \ref{Euler}: 
\begin{equation*}
\tau(G) = 1/j_\iy(\un{G}) \cdot h_\iy(\un{G}) \cdot t_\iy(G) 
        = \fc{h_0(\un{F})}{h_1(\un{F})} \cdot \fc{h_2(\un{F})}{|i(\un{F})|} \cdot |\wh{F_\iy}^{\fg_\iy}|
        = \fc{\chi_{\{\iy\}}(\un{F})}{|i(\un{F})|} \cdot |\wh{F_\iy}^{\fg_\iy}|. 
\end{equation*}
The rest is Lemma \ref{abs almost simple non qs}.  
\end{proof}

\begin{remark} \label{density}
By the geometric version of \v{C}ebotarev's density theorem (see in \cite{Jar}), 
there exists a closed point $\iy$ on $C$ at which $G_\iy$ is split. 
We shall call such a point a \emph{splitting point} of $G$. 
\end{remark} 

\begin{cor} \label{quasi split group}
Let $G$ be an adjoint group defined over $K=\BF_q(C)$ with fundamental group $F$ whose order is prime to $\text{char}(K)$ 
and whose splitting field is $L$. 
Choose some splitting point $\iy$ of $G$ on $C$ and let $R$ be a minimal \'etale~extension of $\Oiy := \BF_q[C-\{\iy\}]$ such that $R \otimes_{\Oiy} K = L$.   
Let $N^{(0)}:R^\times \to \Oiy^\times$ be the induced norm. Then: 
\begin{itemize}
\item[(1)] If $G$ is of type ${^2}\text{D}_{2k}$ then $\tau(G) = \fc{|R^\times[2]|}{[R^\times:(R^\times)^2]} \cdot |F|$.  
\item[(2)] If $G$ is of type ${^{3,6}}\text{D}_4$ or ${^2}\text{E}_6$ then $\tau(G) = \fc{|\ker(N^{(0)}[m])|}{|\ker(N^{(0)}/m)|} \cdot |F|$ (see Notation \ref{[m] and /m}). 
\end{itemize}
In both cases if $L$ is imaginary over $K$, then $\tau(G) = |F|$. 
\end{cor}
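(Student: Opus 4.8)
The plan is to specialize Theorem~\ref{tau G 2} to the chosen splitting point and to make the fundamental group $\un{F}$ explicit as a Weil restriction or a norm-one form, so that the factor $l(\un{F})$ becomes exactly the quotient appearing in the corresponding case of Definition~\ref{l}. First I would record that types $\text{D}$ and $\text{E}_6$ are not of (absolute) type $\text{A}$, and that in each case $F$ is admissible in the sense of Definition~\ref{admissible}; then Theorem~\ref{tau G 2} applies and gives, for the chosen prime $\iy$,
$$ \tau(G) = l(\un{F}) \cdot |\wh{F_\iy}^{\fg_\iy}|. $$

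Next I would dispose of the local factor. Since $\iy$ is a splitting point (Remark~\ref{density}), $G_\iy$ is split over $\hat{K}_\iy$, so $\fg_\iy$ acts trivially on the character group $\wh{F_\iy}$; hence $\wh{F_\iy}^{\fg_\iy} = \wh{F_\iy}$, which has order $|F|$. This reduces both asserted identities to $\tau(G) = l(\un{F}) \cdot |F|$, and only the identification of $l(\un{F})$ remains.

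The core step is to read $\un{F}$ off from the action of the $*$-action group on the center of $\un{G}^\sc$. For type ${}^2\text{D}_{2k}$ the center is $\un{\mu}_2 \times \un{\mu}_2$ and the outer involution interchanges the two half-spin central elements; identifying these with the two factors shows $\un{F} = \text{Res}_{R/\Oiy}(\un{\mu}_2)$, and the first case of Definition~\ref{l} gives $l(\un{F}) = |R^\times[2]|/[R^\times:(R^\times)^2]$, which is assertion~(1). For the triality types ${}^{3,6}\text{D}_4$ the center is again $\un{\mu}_2 \times \un{\mu}_2$, but now the three non-trivial central involutions are permuted; taking $R$ to be the cubic resolvent extension (of degree $3$, hence prime to $m=2$) identifies $\un{F}$ with the norm-one subgroup $\text{Res}^{(1)}_{R/\Oiy}(\un{\mu}_2)$ as the kernel of the sum map, and the second case of Definition~\ref{l} with $m=2$ yields assertion~(2). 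For type ${}^2\text{E}_6$ the center is $\un{\mu}_3$ and the outer involution acts by inversion through the quadratic $L$, so $\un{F} = \text{Res}^{(1)}_{R/\Oiy}(\un{\mu}_3)$ with $[R:\Oiy]=2$ prime to $m=3$, again matching assertion~(2). In each case admissibility is precisely the primality of the relevant degree to $m$ that Definition~\ref{l} requires.

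Finally, for the imaginary case I would show $l(\un{F}) = 1$. If $L$ is imaginary then $\iy$ has a single prime $\iy'$ above it, so by Remark~\ref{finite etale extension is embedded in generic fiber} the extension $R$ (resp. the cubic resolvent) is the ring of functions regular away from one closed point; a degree-zero divisor supported at a single point is trivial, so $R^\times$ reduces to the constants $\BF_{q'}^\times$. A direct count then forces $l(\un{F}) = 1$: in the quasi-split case $|R^\times[2]| = 2 = [R^\times:(R^\times)^2]$, and in the norm-one cases the surjectivity of the finite-field norm makes the numerator and denominator kernel orders coincide. Substituting $l(\un{F}) = 1$ gives $\tau(G) = |F|$. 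I expect the main obstacle to be the explicit Galois-module description of $\un{F}$ in the triality case ${}^{3,6}\text{D}_4$ — verifying that the permutation of the three central involutions is exactly the norm-one module over the cubic resolvent, and that for ${}^6\text{D}_4$ the fundamental group sees only this cubic resolvent and not the full degree-six splitting field.
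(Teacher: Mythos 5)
Your proposal is correct and follows essentially the same route as the paper: both reduce to Theorem~\ref{tau G 2}, identify $F$ as $\text{Res}_{R/\Oiy}(\un{\mu}_2)$ in the ${}^2\text{D}_{2k}$ case and as $\text{Res}^{(1)}_{R/\Oiy}(\un{\mu}_m)$ with degree prime to $m$ in the others (the paper simply cites \cite[p.333]{PR} for this), evaluate $|\wh{F_\iy}^{\fg_\iy}| = |F_\iy| = |F|$ at the splitting point, and settle the imaginary case by observing that $R^\times$ then consists of constants, which forces $l(\un{F})=1$. Your extra care in the ${}^{6}\text{D}_4$ case --- that the module is the norm-one group over the cubic resolvent, of degree $3$ prime to $m=2$, rather than over the full degree-six splitting field --- is a genuine refinement of a point the paper's proof glosses over.
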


\begin{proof}
All groups under consideration are almost simple. 
When $G$ is adjoint of type ${^2}\text{D}_{2k}$ then $F$ is quasi-split, and when it is adjoint both of type ${^{3,6}}\text{D}_4$ or ${^2}\text{E}_6$   
then $F = \text{Res}^{(1)}_{L/K}(\mu_m)$ where $m$ is prime to $[L:K]$ (e.g., \cite[p.333]{PR}), thus $F$ is admissible.  
So the assertions $(1),(2)$ are just Theorem \ref{tau G 2} in which as $F_\iy$ splits, $|\wh{F_\iy}^{\fg_\iy}| = |F_\iy| = |F|$. 

As $C$ is projective, removing a single point $\iy$ from it implies that $\Oiy^\times = \BF_q^\times$ 
(an element of $\Oiy$ is regular at $\iy^{-1}$, thus its inverse is irregular there, hence not invertible in $\Oiy$,   
unless it is a unit). 
If $L$ is imaginary, then in particular $R = \BF_q[C'-\{\iy'\}]$ where $C'$ is a finite \'etale cover of $C$ and $\iy'$ is the unique point lying over $\iy$, 
thus still $R^\times = \BF_q^\times$ being finite, whence $|R^\times[2]| = [R^\times:(R^\times)^2]$. 
In the cases $F$ is not quasi-split the equality $R^\times = \Oiy^\times = \BF_q^\times$ means that $N^{(0)}$ is trivial, and we are done.  
\end{proof}

{\bf Acknowledgements:} 
I thank P.~Gille, B.~Kunyavski\u\i\ and U.~Vishne for valuable discussions concerning the topics of the present article. 
I would like also to thank the anonymous referee for a careful reading and many constructive remarks.

\end{document}